\documentclass[11pt]{amsart}
\usepackage[margin=1.4 in]{geometry}

\usepackage{amsmath, amscd}
\usepackage{mathrsfs}
\usepackage{amsfonts}
\usepackage{latexsym,epsfig}
\usepackage{mathabx}
\usepackage{amsmath, amscd, amsthm}
\usepackage[pdftex]{color}
\usepackage{comment}
\usepackage{marginnote}
\usepackage[colorlinks,citecolor=blue,linkcolor=red]{hyperref}
\usepackage[nameinlink]{cleveref}

\newcommand{\MCG}[0]{\mathcal{MCG}}

\newcommand{\Out}[0]{\mathrm{Out}}

\newcommand{\PP}[0]{\mathcal{P}}

\newcommand{\R}{\mathbb{R}}

\newcommand{\im}[0]{\text{im }}

\newcommand{\m}{\mathfrak{m}}

\renewcommand{\sl}{\mathrm{sl}}
\newcommand{\cone}{\mathrm{cone}}
\newcommand{\cusp}{\mathrm{cusp}}

\usepackage{hyperref}


\newtheorem{theorem}{Theorem}[section]
\newtheorem{lemma}[theorem]{Lemma}
\newtheorem{proposition}[theorem]{Proposition}
\newtheorem{corollary}[theorem]{Corollary}
\newtheorem*{claim*}{Claim}
\newtheorem{claim}{Claim}
\newtheorem{definition}{Definition}

\theoremstyle{definition}
\newtheorem{remark}[theorem]{Remark}

\newtheorem{question}{Question}

\newtheorem{example}{Example}

\newcommand{\F}{\mathcal{F}}

\newcommand{\T}{\mathcal{T}}
\newcommand{\X}{\mathcal{X}}

\newcommand{\orb}{\mathrm{orb}_x}
\newcommand{\define}[1]{\textbf{#1}}

\renewcommand{\P}{\mathcal{P}}
\newcommand{\diam}{\mathrm{diam}}
\renewcommand{\im}{\mathrm{im}}

\newcommand{\symd}{d_{\mathrm{sym}}}

\begin{document}

\title[Pulling back stability]{Pulling back stability with applications to $\mbox{Out}(F_{n})$ and relatively hyperbolic groups}

\author[T. Aougab]{Tarik Aougab}
\address{Department of Mathematics\\
Brown University\\
151 Thayer\\
Providence, RI 02912, USA\\}
\email{\href{mailto:tarik\_aougab@brown.edu}{tarik\_aougab@brown.edu}}
\author[M.G. Durham]{Matthew Gentry Durham}
\address{Department of Mathematics\\
University of Michigan\\
530 Church St\\
Ann Arbor, MI 48109, USA\\}
\email{\href{mailto:durhamma@umich.edu}{durhamma@umich.edu}}
\author[S.J. Taylor]{Samuel J. Taylor}
\address{Department of Mathematics\\ 
Temple University\\ 
1805 North Broad St\\ 
Philadelphia, PA 19122, USA\\}
\email{\href{mailto:samuel.taylor@temple.edu}{samuel.taylor@temple.edu}}
\date{\today}

\maketitle

\begin{abstract} 
We prove that stability -- a strong quasiconvexity property -- pulls back under proper actions on proper metric spaces. This result has several applications, including
that convex cocompact subgroups of both mapping class groups and  
outer automorphism groups of free groups are stable. We also characterize stability 
in relatively hyperbolic groups whose parabolic subgroups have linear divergence.
\end{abstract}

\section{Introduction}

The concept of hyperbolicity has been central to the study of finitely generated groups, since hyperbolic groups satisfy a host of useful algebraic, geometric, and algorithmic properties \cite{Gromov:essay, alonso1991notes, ghys1990groupes}. Thus if a geodesic metric space $X$ is not globally hyperbolic, it becomes natural to look for the subspaces or directions along which $X$ does exhibit negatively curved behavior. 

The aim of this paper is to study a negatively curved behavior called \textit{stability}, which is a generalization of the notion of quasiconvexity in hyperbolic groups.
Informally, 
an undistorted, quasiconvex subspace $Y$ of a geodesic metric space $X$ is called \define{stable} if any two quasigeodesics in $X$ with common endpoints in $Y$ are forced to uniformly fellow travel. A subgroup $H$ of a finitely generated group $G$ is stable 
when it constitutes a stable subset of a Cayley graph for $G$.
We note that stable subgroups are always hyperbolic and quasiconvex, and subgroup stability is a quasi-isometry invariant.

We prove that stability pulls back under proper actions on proper spaces (\Cref{th:pull_back}):

\begin{theorem}[Pulling back stability] \label{intro:th_pulling_back} 
Let $G$ be a finitely generated group with a proper action $G \curvearrowright X$ on a proper geodesic metric space $X$. Let $H \le G$ be 
such that for some $x \in X$, the orbit map $\orb \colon G \to X$ given by $g \mapsto gx$ restricts to a stable embedding on $H$. Then $H$ is stable in $G$.
\end{theorem}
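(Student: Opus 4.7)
I plan to establish (i) that $H \hookrightarrow G$ is a quasi-isometric embedding, and (ii) that $H$ is Morse in $G$, meaning that for every $(K, C)$ there exists $R = R(K, C)$ such that every $(K, C)$-quasigeodesic in $G$ with endpoints in $H$ lies in the $R$-neighborhood $N_R(H)$. Together, (i) and (ii) imply stability of $H$ in $G$.

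For (i), since $G$ acts by isometries on $X$, the orbit map $\orb \colon G \to X$ is $L$-Lipschitz, where $L = \max_{s \in S} d_X(x, sx)$ for a finite generating set $S$ of $G$. The stable embedding hypothesis gives that $\orb|_H \colon H \to Hx$ is in particular a quasi-isometric embedding with respect to the intrinsic word metric $d_H$ on $H$. Combined with the obvious bound $d_G \le c \cdot d_H$ on $H$ (from writing $H$-generators in terms of $G$-generators), chaining these inequalities yields $d_H \asymp d_G$ on $H$, so the inclusion $H \hookrightarrow G$ is a quasi-isometric embedding.

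For (ii), let $\gamma$ be a $(K, C)$-quasigeodesic in $G$ with endpoints $h_1, h_2 \in H$. Properness of the action gives a function $\phi$ with $d_G(g, g') \le \phi(d_X(gx, g'x))$ for all $g, g' \in G$. I would fix a large threshold $M_0$ and greedily subdivide $\gamma$ into vertices $h_1 = v_0, v_1, \ldots, v_n = h_2$ with $d_X(v_i x, v_{i+1} x)$ lying in a bounded window around $M_0$, then form the piecewise-geodesic path $\tilde\gamma$ in $X$ through $v_0 x, \ldots, v_n x$ using $X$-geodesics between consecutive points. Provided $M_0$ is chosen large enough in terms of $K, C, L$, and the Morse gauge of $Hx$ in $X$, I would argue that $\tilde\gamma$ is a $(K', C')$-quasigeodesic in $X$ with constants depending only on this data. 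Applying the Morse property of $Hx$ then forces $\tilde\gamma \subset N_{R_0}(Hx)$, so each $v_i x$ lies within $R_0$ of $Hx$, and properness forces each $v_i$ to lie within $\phi(R_0)$ of $H$ in $G$. Since consecutive $v_i, v_{i+1}$ are at $d_G$-distance at most $\phi(M_0 + L)$, interpolating along $\gamma$ shows every point of $\gamma$ lies within a uniform distance of $H$.

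The main obstacle is verifying that $\tilde\gamma$ is a quasigeodesic in $X$. In a non-hyperbolic proper geodesic space, piecewise-geodesic paths with long segments need not be quasigeodesics: non-adjacent segments can backtrack, and the naive bound $d_X(v_i x, v_j x) \ge \phi^{-1}(d_G(v_i, v_j))$ coming from properness may grow too slowly to exclude this. The delicate part will be combining the quasigeodesic property of $\gamma$ in $G$ (which forces $d_G(v_i, v_j)$ to grow linearly in $j - i$ with the subdivision spacing) with an iterative use of the Morse property of $Hx$ on shorter subsegments of $\tilde\gamma$ to rule out backtracking and promote local good behavior to a global quasigeodesic bound; this is where the technical heart of the theorem lies.
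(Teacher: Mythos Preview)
Your plan has a genuine gap at the step you yourself flag: there is no reason for the piecewise-geodesic $\tilde\gamma$ in $X$ to be a quasigeodesic. The orbit map is only Lipschitz in one direction; properness gives $d_G(g,g') \le \phi(d_X(gx,g'x))$ for some function $\phi$, but $\phi$ can be wildly nonlinear, so the linear growth of $d_G(v_i,v_j)$ in $|i-j|$ translates only to $d_X(v_ix,v_jx) \ge \phi^{-1}(\text{linear in }|i-j|)$, which may be logarithmic. Your proposed fix, to apply the Morse property of $Hx$ to ``shorter subsegments of $\tilde\gamma$,'' does not work: those subsegments have no endpoints on $Hx$, so the Morse gauge of $Hx$ says nothing about them. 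There is no bootstrap available here, because the Morse property is a statement about paths \emph{with endpoints on the stable set}, not an intrinsic property of the ambient space.

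The paper circumvents this entirely by replacing the Morse characterization of stability with an equivalent one, \emph{middle recurrence}, which is phrased in terms of paths $p$ satisfying a slope bound $\|p\| \le C \cdot |p|$ rather than a quasigeodesic bound. The point is that a Lipschitz map (such as the orbit map) sends a bounded-slope path to a bounded-slope path, whereas it need not send a quasigeodesic to a quasigeodesic. The paper first proves, with explicit control on the functions involved, that a quasigeodesic is stable if and only if it is middle recurrent (Theorems~\ref{th:MiddleRecurrence_stable} and~\ref{th:S_MT}). Then, for a geodesic $\gamma$ in $H$ and a bounded-slope path $p$ in $G$ with endpoints on $\gamma$, the images $\widebar\gamma$ and $\widebar p$ in $X$ satisfy the same kind of hypothesis, middle recurrence of $\widebar\gamma$ in $X$ forces $\widebar p$ to come close to the middle of $\widebar\gamma$, and properness of the action pulls this back to $G$. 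The effectiveness of the equivalence (uniform dependence of the recurrence function on the stability function) is essential, since one needs the conclusion uniformly over all $H$-geodesics.
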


Our main applications are to establishing criteria for subgroup stability in subgroups of mapping class groups,
 outer automorphism groups of free groups (\Cref{intro:th_mod_out}), and relatively hyperbolic groups (\Cref{intro:th_stab_char}).

The stability property is a generalization of the more familiar \textit{Morse stability property} for a single quasigeodesic. For a function $D \colon \mathbb{R}_{\ge  0}^{2} \rightarrow \mathbb{R} _{\ge 0}$, a quasigeodesic $\gamma$ in a geodesic metric space $X$ is called \define{$D$-stable} if  any $(\kappa, \lambda)$-quasigeodesic $g$ with endpoints on $\gamma$ remains within the $D(\kappa, \lambda)$-neighborhood of $\gamma$. The connection between stability for an undistorted subgroup $H$ and the stability property for a single quasigeodesic is as follows: $H$ is stable in $G$ if and only if there exists $D$ so that every geodesic in $H$ is $D$-stable in $G$. 
That is, stability of a subgroup equates to \textit{uniform} stability for its geodesics in the ambient group. 

A main ingredient in the proof of \Cref{intro:th_pulling_back} is to demonstrate an alternative characterization of Morse stability, called \textit{middle recurrence}, which was first introduced by Drutu--Mozes--Sapir \cite{DMS}. This characterization satisfies two important properties: 
\begin{enumerate}
\item It behaves well under Lipschitz maps (e.g., orbit maps of finitely generated groups into metric spaces).
\item It constitutes an \textit{effective} characterization of stability; that is, we prove an explicit relationship between the stability function $D(\kappa, \lambda)$ and the function associated with the middle recurrence characterization. As stability of a subgroup requires a simultaneous control of the stability functions for all geodesics of that subgroup, this effectiveness plays a crucial role. 
\end{enumerate}

A quasigeodesic $q \subset X$ is called \define{middle recurrent} if for any $C> 0$, there exists $K\geq 0$ so that if $p$ is any path with endpoints $a,b$ on $q$ whose arclength is less than $ C \cdot d_{X}(a,b)$, then $p$ meets the $K$-neighborhood of the ``middle portion'' of $q$ between $a$ and $b$.  A precise definition of ``middle portion'' can be found in \Cref{sec:mr and stab}, but heuristically this is a subset of $q$ which lies at a definite distance from both endpoints $a$ and $b$. Note that the condition $\ell(p) \leq C \cdot d_{X}(a,b)$ does not impose any local constraint on the path $p$; in particular, it need not be a quasigeodesic. We prove the following:

\begin{theorem}[Middle recurrence and stability] \label{th:intro_middle_recurrence}
Let $q$ be a quasigeodesic in a geodesic metric space $X$. Then $q$ is stable if and only if $q$ is middle recurrent. Moreover, its recurrence function can be bounded from above only in terms of its stability function, and visa versa.
\end{theorem}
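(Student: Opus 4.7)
The plan is to prove each direction by a constructive argument that yields explicit estimates, so that the recurrence and stability functions effectively determine each other.

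\emph{Middle recurrence implies stability.} Fix a $(\kappa, \lambda)$-quasigeodesic $g$ from $a$ to $b$ on $q$, parameterized by arclength. Since $\ell(g) \leq \kappa \cdot d_X(a,b) + \kappa\lambda$, once $d_X(a,b)$ exceeds a threshold the ratio of length to endpoint distance is bounded by some $C_0 = C_0(\kappa, \lambda)$. Middle recurrence then supplies a point $g(s_0)$ within $K(C_0)$ of the middle of $q|_{[a,b]}$. To iterate on each half of $g$, I would append a short geodesic from $g(s_0)$ to a nearby point $m \in q$, producing a path with endpoints on $q$ whose length-to-endpoint-distance ratio is still bounded by a constant depending only on $C_0$; middle recurrence then applies again, and a positional check (comparing distances from $a$) ensures that the new middle-close point lies on $g$ itself rather than on the appended segment. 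Iterating dyadically up to level $N^* \sim \log_2\!\bigl(d_X(a,b)/K(C_0)\bigr)$ produces a net of points on $g$, spaced $O(K(C_0))$ apart in arclength, each within $O(K(C_0))$ of $q$; the triangle inequality then bounds $d_X(g(t), q)$ uniformly by a constant depending only on $\kappa, \lambda$, and the recurrence function.

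\emph{Stability implies middle recurrence.} Let $p$ be a path from $a$ to $b$ on $q$ with $\ell(p) \leq C \cdot d_X(a,b)$. First, $D$-stability applied to both the geodesic $[a,b]$ and to $q|_{[a,b]}$ (quasigeodesics between the same pair of points on $q$) places them at Hausdorff distance controlled by $D$ and by the quasigeodesic constants of $q$, so the midpoint of $[a,b]$ is close to the middle of $q|_{[a,b]}$; it suffices to show $p$ passes near this midpoint. From $p$ I would extract a quasigeodesic $\tilde p$ from $a$ to $b$ by sampling along $p$ at a scale $R$ and replacing the polygonal path through the samples by a tamed version (excising backtrack loops). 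The quasigeodesic constants $(\kappa_C, \lambda_C)$ of $\tilde p$ depend only on $C$ and $R$, and $\tilde p$ remains Hausdorff $O(R)$-close to $p$. Stability then places $\tilde p$ within $D(\kappa_C, \lambda_C)$ of $q$; continuity of the nearest-point projection, together with $\tilde p$ having endpoints $a, b$ on $q$, forces this projection to cover the middle of $q|_{[a,b]}$ up to bounded error, yielding a point of $p$ within $O(R + D(\kappa_C, \lambda_C))$ of the middle.

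The main obstacle is in the second direction: the taming step must convert an arbitrary path into a quasigeodesic whose constants and whose Hausdorff approximation error are both controlled purely in terms of $C$. This bookkeeping -- choosing $R$ optimally, pruning backtracks while tracking the effect on the quasigeodesic inequality, and quantifying the surjectivity of the projection onto $q|_{[a,b]}$ -- is what extracts the explicit dependence of $K(C)$ on $D(\kappa,\lambda)$ required by the ``moreover'' clause.
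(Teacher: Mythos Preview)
Your forward direction (middle recurrence $\Rightarrow$ stability) differs from the paper's but is viable. The paper avoids iteration altogether: it shows that any maximal subarc of the (tamed) quasigeodesic $g$ lying outside a fixed $d$-neighborhood of $q$ has bounded length, via a \emph{single} application of middle recurrence to the concatenation $[x,g(a')]\cdot g|_{[a',b']}\cdot[g(b'),y]$, whose slope is at most $4\kappa'+1$ regardless of which gap is chosen. Your dyadic scheme can also be made to work with more bookkeeping---the key point being that each subpath you feed back into the recurrence hypothesis is a segment of the \emph{original} tame quasigeodesic $g$ plus a short tail, so its length is governed by $\kappa'$ rather than by a degrading slope---but the paper's one-shot gap argument is cleaner.

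The reverse direction has a genuine gap: the taming step you propose is impossible. You need a $(\kappa_C,\lambda_C)$-quasigeodesic $\tilde p$ from $a$ to $b$ with $\tilde p\subset N_R(p)$ and with $\kappa_C,\lambda_C,R$ depending only on $C$; but the paper's own example (the discussion of ``Property~5'' in \Cref{sec:Mid_Lit}) rules this out even when $q$ is a geodesic in $\mathbb H^2$. Take $q$ the horizontal diameter, $a=-e^D$, $b=e^D$, and let $p$ follow $q$ except for a detour along the upper half of the hyperbolic circle of radius $D$ about the origin; then $\sl(p)\le 3$. If your $\tilde p$ existed, the Morse lemma in $\mathbb H^2$ would place some $z\in\tilde p$ within a fixed distance $M(\kappa_3,\lambda_3)$ of the origin, whence $d(0,p)\le R+M(\kappa_3,\lambda_3)$; but $d(0,p)=D$ is arbitrary. (Middle recurrence itself survives this example, since $p$ already meets the $t$-middle of $q$ along its undeflected portions.) Excising backtracks cannot help: any pruning that restores the quasigeodesic inequality must discard the circular detour, and then $\tilde p$ is no longer contained in a bounded neighborhood of $p$ near the origin.

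The paper's route is entirely different. It imports the Arzhantseva--Cashen--Gruber--Hume result (\Cref{prop:stab_contract}) that a $D$-stable quasigeodesic is $\rho$-contracting for a sublinear $\rho$ determined by $D$, and then proves directly (\Cref{lem:contract}) that a path $h$ staying at distance $\ge K$ from a $\rho$-contracting set satisfies $\rho(K)/K \ge (1-(2K+\rho(K))/|h|)/\sl(h)$. Applied to the subarc of $p$ that crosses from the left of the $t$-middle to the right while remaining outside $N_K(q)$, this forces $K$---and hence the recurrence bound---to be controlled by $C$, $t$, the quasigeodesic constants of $q$, and $\rho$ alone. The contraction property is the substitute for your taming step; without it (or something of comparable strength), the argument does not close.
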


See Subsection \ref{sec:Mid_Lit} for a discussion of middle recurrence in \cite{DMS}.  In forthcoming work, the authors establish effective control on the \emph{divergence} of a stable quasigeodesic in terms of its stability function. That these notions are related follows from \cite{DMS, CharSult, ACGH}.

\subsection*{Applications.}

Our motivation for \Cref{intro:th_pulling_back} lies in its applications---the study of stable subgroups of the mapping class group, the outer automorphism group of the free group, and relatively hyperbolic groups.

\subsubsection*{$\MCG(S)$ and $\Out(F_n)$}


 Given an orientable surface $S$ of finite type, let $\MCG(S)$ denote its mapping class group and let $\mathcal{T}(S)$ denote its Teichm{\"u}ller space. Motivated by the many analogies between the action of $\MCG(S)$ on $\mathcal{T}(S)$ and the action of a Kleinian group on hyperbolic space, Farb--Mosher \cite{FarbMosher} defined a subgroup $H \le \MCG(S)$ to be \emph{convex cocompact} if for any $X \in \mathcal{T}(S)$, the orbit $ H \cdot X \subset \mathcal{T}(S)$ is quasiconvex with respect to the Teichm\"uller metric.  When $S$ is closed, combined work of Farb-Mosher and Hamenst\"adt \cite{HamCC} proves that such $H$ are precisely those subgroups which determine hyperbolic surface group extensions.

Kent--Leininger \cite{KentLein} and independently Hamenst\"adt \cite{HamCC} later proved that convex cocompactness is equivalent to the condition that any orbit map of $H$ into the \textit{curve complex} $\mathcal{C}(S)$ is a quasi-isometric embedding.
 The second and third authors \cite{DT15} subsequently gave a characterization of convex cocompactness that is \textit{intrinsic} to $\MCG(S)$, in that it does not reference an action on some external space: \emph{$H$ is convex cocompact if and only if $H$ is stable in $\MCG(S)$}. 

Let $\Out(F_{n})$ denote the outer automorphism group of the free group $F_{n}$ on $n\geq 3$ generators. Motivated by the above results, Hamenst\"adt--Hensel \cite{hamenstadt2014convex} proposed the definition that $H \le \Out(F_{n})$ is \emph{convex cocompact} if any orbit map into the \textit{free factor complex} $\mathcal{F}_n$ (see \Cref{subsec:out} for definitions) is a quasi-isometric embedding.  
Dowdall--Taylor \cite{dowdall2014hyperbolic} proved that such subgroups, if purely atoroidal, determine hyperbolic extensions of $F_n$, analogous to the situation in $\MCG(S)$.

Using \Cref{intro:th_pulling_back} and work of Dowdall--Taylor \cite{dowdall2014hyperbolic,dowdall2015contracting}, we recover one direction of the main theorem of \cite{DT15} and extend it to $\Out(F_{n})$, using one unified approach (\Cref{thm:stab_map} and \Cref{th: Out}):

\begin{theorem} \label{intro:th_mod_out} If $H\le \MCG(S)$ quasi-isometrically embeds into $\mathcal{C}(S)$, then $H$ is stable in $\MCG(S)$. Similarly, if $G\le \Out(F_{n})$ quasi-isometrically embeds into $\mathcal{F}_n$, then $G$ is stable in $\Out(F_{n})$. Thus in both cases, convex cocompactness implies stability. 
\end{theorem}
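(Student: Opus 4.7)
The plan is to deduce both statements from the Pulling Back Stability Theorem~\ref{intro:th_pulling_back} by identifying, in each case, a proper action of the ambient group on a proper geodesic metric space into which the subgroup orbit embeds stably. The input hypothesis (QI-embedding into $\mathcal{C}(S)$ or $\mathcal{F}_n$) is not directly usable in Theorem~\ref{intro:th_pulling_back}, because neither curve complex nor free factor complex is a proper space. So the first task is to transfer the hypothesis to a proper space supporting a proper action, and the second task is to upgrade the transferred statement from quasiconvex orbit to \emph{stable} orbit.

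For $\MCG(S)$, the natural candidate is Teichm\"uller space $\mathcal{T}(S)$ with its Teichm\"uller metric, which is proper and geodesic, and on which $\MCG(S)$ acts properly. If an orbit map of $H$ into $\mathcal{C}(S)$ is a QI-embedding, then by Kent--Leininger and Hamenst\"adt $H$ is convex cocompact, so the $H$-orbit in $\mathcal{T}(S)$ is QI-embedded, quasiconvex, and lies in a thick part. Work of Minsky and Rafi shows that Teichm\"uller geodesics in a fixed thick part are uniformly strongly contracting, hence Morse with a uniform stability gauge. Consequently the orbit map $H \to \mathcal{T}(S)$ is a stable embedding in the sense of Theorem~\ref{intro:th_pulling_back}, and that theorem immediately gives that $H$ is stable in $\MCG(S)$.

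For $\Out(F_n)$, the analogous setup uses the proper action on a proper geodesic metric space built from Outer space (e.g.\ its spine, or a suitable symmetrization of the Lipschitz metric, so that the asymmetry of the Lipschitz metric is not an obstruction to properness). The hypothesis that $G$ QI-embeds into $\mathcal{F}_n$ is convex cocompactness in the sense of Hamenst\"adt--Hensel, and by Dowdall--Taylor \cite{dowdall2014hyperbolic, dowdall2015contracting} the folding paths between $G$-orbit points are uniformly strongly contracting. As in the surface case, this upgrades quasiconvexity of the orbit to a uniform Morse bound, so the orbit map of $G$ is a stable embedding, and Theorem~\ref{intro:th_pulling_back} delivers stability of $G$ in $\Out(F_n)$.

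The main technical point in each case is the stability upgrade in the second step: one needs a \emph{uniform} Morse gauge simultaneously controlling every geodesic in the orbit, not just quasiconvexity or a QI-embedding. This is precisely what the strong contraction results of Minsky--Rafi and of Dowdall--Taylor supply, and it is also the reason the \emph{effective} middle-recurrence characterization (Theorem~\ref{th:intro_middle_recurrence}) is indispensable when chaining these inputs into Theorem~\ref{intro:th_pulling_back}: the pull-back theorem needs a single stability function on the image orbit, which in turn is extracted from the uniform contraction statement via middle recurrence. Once this uniform stability of the orbit into the proper model space is established, the remainder of the argument is a direct application of Theorem~\ref{intro:th_pulling_back}.
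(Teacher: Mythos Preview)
Your treatment of the $\MCG(S)$ case is correct and matches the paper's argument essentially line for line: convex cocompactness gives a quasiconvex orbit in $\mathcal{T}(S)$ lying in a thick part, Minsky's thick-geodesic contraction upgrades this to a stable embedding, and Theorem~\ref{intro:th_pulling_back} finishes.

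For $\Out(F_n)$, however, there is a genuine gap. You propose to apply Theorem~\ref{intro:th_pulling_back} to ``the spine, or a suitable symmetrization of the Lipschitz metric,'' but neither option works as stated. The symmetrized metric $\symd$ on $\X_n$ is proper, but $(\X_n,\symd)$ is \emph{not} a geodesic metric space \cite{FMout}, so the hypotheses of Theorem~\ref{intro:th_pulling_back} fail. The spine is a proper geodesic space with a proper $\Out(F_n)$-action, but the Dowdall--Taylor strong contraction results are proved for \emph{directed} geodesics in the asymmetric Lipschitz metric, not for geodesics in the spine; transferring uniform Morseness to the spine is not automatic and you give no argument for it. The paper is explicit about this obstruction: it says one ``must appeal to the arguments in \Cref{th:pull_back} rather than the theorem statement itself.''

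What the paper actually does is rerun the middle-recurrence proof by hand in the asymmetric setting. It proves a bespoke middle-recurrence lemma (\Cref{lem:Out_middle}) for $D$--strongly contracting directed geodesics in the $\epsilon$--thick part of $\X_n$, using the comparison constant $M_\epsilon$ between $d$ and its reverse to control the asymmetry. Then it imitates the proof of Theorem~\ref{th:pull_back}: push a path $p$ in $\Out(F_n)$ to $\X_n$ via the orbit map, apply \Cref{lem:Out_middle} to find a point of $\overline{p}$ symmetrically close to the middle third of the directed geodesic $\gamma$ between orbit points, use fellow-travelling of $\gamma$ with the $H$-geodesic image (Dowdall--Taylor), and pull back via properness of the orbit map with respect to $\symd$. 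Your high-level outline is right in spirit, but the step ``apply Theorem~\ref{intro:th_pulling_back}'' hides real work that the asymmetry forces you to redo.
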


We note that Hamenst\"adt has announced a complete characterization of stable subgroups of $\Out(F_{n})$; according to this announcement, the converse of \Cref{intro:th_mod_out} in the setting of $\Out(F_{n})$ does not hold \cite{HamAnounce}. 

\subsubsection*{Relatively hyperbolic groups} Mirroring the situation in $\MCG(S)$ and $\Out(F_{n})$, we also obtain a criterion for stability in a relatively hyperbolic group; in this result the cusped space \cite{GM08} plays the role of the curve and free factor complexes (see \Cref{subsec:relhyp} for the relevant definitions). 

\begin{theorem}[\Cref{prop:pb_relhyp}] \label{intro:th_stab_rel_crit} Suppose that $G$ is hyperbolic relative to a family of subgroups $\mathcal{P}$,  
and that $H$ is a subgroup of $G$.
If an orbit map of $H$ into the cusped space $\cusp(G, \mathcal{P})$ is a quasi-isometric embedding, then $H$ is stable in $G$.
\end{theorem}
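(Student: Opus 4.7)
The plan is to derive this theorem as an essentially immediate consequence of the Pulling Back Stability theorem (\Cref{intro:th_pulling_back}), applied to the natural isometric action of $G$ on the cusped space $X := \cusp(G,\mathcal{P})$. Thus I would need to verify two things: first, that this action fits the hypotheses of the pull-back theorem (that the space is proper and geodesic, and the action is proper), and second, that the restriction of the orbit map $\orb \colon G \to X$ to $H$ is a stable embedding in the sense of the paper, not merely a quasi-isometric embedding.

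For the first point, I would invoke the standard properties of the Groves--Manning cusped space \cite{GM08}. Under the standing assumption that the peripheral subgroups in $\mathcal{P}$ are finitely generated (implicit in our use of relative hyperbolicity), $\cusp(G,\mathcal{P})$ is a proper geodesic Gromov-hyperbolic space, and the action of $G$ on it by isometries is proper. These are built-in features of the construction and require no new work.

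The substantive observation is the second point: because $\cusp(G,\mathcal{P})$ is $\delta$-hyperbolic, \emph{every} quasi-isometrically embedded subset is automatically stable. Indeed, given that $\orb \colon H \to X$ is a $(K,C)$-quasi-isometric embedding by hypothesis, the Morse lemma in hyperbolic spaces furnishes a function $D(\kappa,\lambda)$ depending only on $\delta$, $K$, and $C$ such that every $(\kappa,\lambda)$-quasigeodesic in $X$ with endpoints on the $H$-orbit lies in the $D(\kappa,\lambda)$-neighborhood of any geodesic between its endpoints, and so within a controlled neighborhood of the orbit itself. Translating this into the subset-stability language used in the paper (equivalently, via the middle recurrence characterization of \Cref{th:intro_middle_recurrence}) shows that the orbit map of $H$ is a stable embedding into $X$.

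Once both points are in place, \Cref{intro:th_pulling_back} applies directly and yields that $H$ is stable in $G$, as claimed. The main (mild) obstacle I anticipate is simply the bookkeeping of the second step: making precise that the Morse-type conclusion in the hyperbolic cusped space delivers the exact form of stable embedding demanded as input by the pull-back theorem, and doing so \emph{uniformly} across geodesics of $H$ (rather than quasigeodesic by quasigeodesic). This is where the effectivity in \Cref{th:intro_middle_recurrence} --- controlling the stability function in terms of the recurrence function --- provides a convenient hammer, but no genuinely new geometric input beyond hyperbolicity of $\cusp(G,\mathcal{P})$ is needed.
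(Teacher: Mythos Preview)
Your proposal is correct and matches the paper's own proof essentially word for word: the paper notes that $\cusp(G,\P)$ is a locally finite (hence proper) hyperbolic graph on which $G$ acts properly, so a quasi-isometric orbit map is automatically a stable embedding, and then invokes \Cref{th:pull_back}. Your added remarks about the Morse lemma and effectivity are fine but unnecessary---the paper leaves this implicit, since stability of any undistorted subset of a hyperbolic space is immediate.
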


\Cref{intro:th_stab_rel_crit} can be promoted to a full characterization of stable subgroups when the extra assumption of linear divergence is placed on the peripheral subgroups $\mathcal{P}$.

\begin{theorem}[\Cref{th:relhyp_char_stab}] \label{intro:th_stab_char} Let $(G, \mathcal{P})$ be relatively hyperbolic and suppose that each $P \in \P$ is 1-ended with linear divergence. Then 
for any $H \le G$,
the following are equivalent:
\begin{enumerate}
\item $H$ is stable in $G$.
\item $H$ has a quasi-isometric orbit map into $\cusp(G, \mathcal{P})$.
\item $H$ has a quasi-isometric orbit map into the coned-off Cayley graph $\cone(G, \mathcal{P})$. 
\end{enumerate}
\end{theorem}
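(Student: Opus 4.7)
The plan is to prove the three implications $(2) \Rightarrow (1)$, $(3) \Rightarrow (2)$, and $(1) \Rightarrow (3)$. The first is exactly \Cref{intro:th_stab_rel_crit} (i.e.\ \Cref{prop:pb_relhyp}), so I will not address it further. For $(3) \Rightarrow (2)$, I will use the standard coarsely Lipschitz map $\cusp(G,\mathcal{P}) \to \cone(G,\mathcal{P})$ that collapses each combinatorial horoball onto its cone vertex while restricting to the identity on the Cayley graph of $G$. Composing with the orbit map yields $d_{\cone}(hx,h'x) \le L \cdot d_{\cusp}(hx,h'x) + L'$ on the $H$-orbit, so the lower quasi-isometric bound on $d_{\cone}$ given by (3) transfers to one on $d_{\cusp}$. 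The upper bound $d_{\cusp}(hx, h'x) \le d_G(h, h') \le C \cdot d_H(h, h')$ is automatic, since the Cayley graph of $G$ embeds isometrically in $\cusp(G, \mathcal{P})$ and the generators of $H$ have bounded $G$-length.

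The substance of the theorem lies in $(1) \Rightarrow (3)$. The upper bound is again immediate, so I focus on the lower bound and argue by contradiction. Suppose there are $h_n, h_n' \in H$ with $d_H(h_n, h_n') \to \infty$ but $d_{\cone}(h_n, h_n')$ bounded. Translating by $h_n^{-1}$, take $h_n = e$. Stability of $H$ implies $H$ is undistorted in $G$, so $d_G(e, h_n')$ grows linearly in $d_H(e, h_n')$. A bounded-length $\cone$-geodesic from $e$ to $h_n'$ lifts to a $G$-path traversing at most boundedly many peripheral cosets $g_1P_1, \ldots, g_kP_k$ with long excursions $a_i \to_{P_i} b_i$ whose sum of $d_{P_i}$-lengths dominates the total $G$-length. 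By standard projection estimates in relatively hyperbolic groups, the $H$-geodesic from $e$ to $h_n'$, viewed as a $G$-quasigeodesic, shares these same peripheral excursions up to bounded error; in particular, its middle lies within bounded $G$-distance of the midpoint of one of the excursions.

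The key step is then to invoke linear divergence inside each $P_i$ to replace $a_i \to_{P_i} b_i$ by a detour of length $O(d_{P_i}(a_i, b_i))$ that stays outside an $\Omega(d_{P_i}(a_i, b_i))$-neighborhood of the midpoint of the original excursion. Splicing these detours with the bounded connecting pieces outside peripherals produces an alternative $G$-path $\alpha_n$ from $e$ to $h_n'$ of length linear in $d_G(e, h_n')$, which avoids a neighborhood of the middle of the $H$-geodesic whose radius tends to infinity with $n$. This contradicts middle recurrence of the $H$-geodesic, which by \Cref{th:intro_middle_recurrence} is equivalent to stability of $H$.

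The main obstacle will be executing the splicing so that $\alpha_n$ is simultaneously of linear length and stays uniformly far from the entire middle portion of the $H$-geodesic; this is where the relative-hyperbolic projection estimates identifying the peripheral excursions of distinct $G$-quasigeodesics play their essential role. The 1-endedness of the peripherals is used to exclude degenerate cases in which $H$ meets a two-ended peripheral in a subgroup already carrying its own divergent geodesics, and the linear divergence hypothesis supplies exactly the geometric room to detour each peripheral excursion without superlinear length cost.
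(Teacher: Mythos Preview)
Your handling of $(2)\Rightarrow(1)$ and $(3)\Rightarrow(2)$ matches the paper. The divergence is entirely in $(1)\Rightarrow(3)$, where the paper takes a cleaner route that sidesteps the difficulties in your sketch.

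The paper first proves a projection lemma (\Cref{cor:srqc via proj}, via Sisto's distance formula): the orbit map $H\to\cone(G,\P)$ is a quasi-isometric embedding if and only if $H$ is undistorted in $G$ and the peripheral projections $\diam_P(H)$ are uniformly bounded over all cosets $P\in\overline\P$. Since stability forces undistortion, failure of $(3)$ yields $h_i,h_i'\in H$ and cosets $g_iP$ with $d_{g_iP}(h_i,h_i')\ge i$. A $P$-geodesic $p_i$ between the projections is a uniform $G$-quasigeodesic which (again by Sisto) shares endpoints up to bounded error with a subsegment of the $H$-geodesic $[h_i,h_i']_H$; stability of $H$ in $G$ then forces $p_i$ to fellow travel that subsegment, so $p_i$ is itself uniformly stable in $G$, hence in $g_iP$. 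Translating into $P$ and passing to a limit produces a biinfinite stable geodesic in $P$, contradicting linear divergence by \cite[Lemma~3.15]{DMS}. One-endedness enters only through this DMS citation.

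Your attempt to instead directly violate middle recurrence of $[e,h_n']_H$ by splicing linear-divergence detours inside peripherals has real gaps. First, the negation of $(3)$ does not give $d_{\cone}(e,h_n')$ bounded, only $d_H/d_{\cone}\to\infty$; without bounded cone-distance you lose the ``boundedly many peripheral excursions'' picture on which your construction rests. Second, linear divergence in $P$ produces a path avoiding a ball about the \emph{midpoint} of that particular $P$-excursion, whereas the $t$-middle of $[e,h_n']_H$ is a segment of length $(1-2t)\,d_H(e,h_n')\to\infty$ that need not be centered at that midpoint and may straddle several excursions; you have not shown the spliced $\alpha_n$ avoids it. Third, your account of one-endedness (ruling out $H$ meeting a two-ended peripheral) is not the operative reason: one-endedness is what makes the linear-divergence hypothesis meaningful enough to preclude Morse geodesics, as in the DMS lemma the paper invokes. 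If you repair the first gap by passing to a single peripheral with unbounded projection, you are essentially rederiving the paper's argument with an explicit middle-recurrence violation in place of the DMS citation.
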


\subsubsection*{Inheriting subgroup stability}

It is a fairly immediate consequence of the definitions that if $H \le L \le G$ with all three finitely generated, and $H$ is stable in $L$ and $L$ is stable in $G$, then $H$ must be stable in $G$. Thus, stability is transitive under subgroup inclusion. Using \Cref{intro:th_pulling_back}, we prove that stability is \textit{inherited} under subgroup inclusion as well:

\begin{theorem} \label{intro:th_inherit} Let $H \le L \le G$, with $H,L$ finitely generated, and suppose that $H$ is stable in $G$. Then $H$ is stable in $L$. 
\end{theorem}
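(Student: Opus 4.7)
The plan is to derive \Cref{intro:th_inherit} as an almost immediate consequence of the Pulling back stability theorem (\Cref{intro:th_pulling_back}), applied with $L$ playing the role of the ambient acting group and with a Cayley graph of $G$ playing the role of the proper geodesic metric space.

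First, since $H$ is stable in $G$, the definition of subgroup stability recalled in the introduction forces $G$ itself to be finitely generated. I would fix a finite generating set for $G$ and let $X$ be the associated Cayley graph with its word metric, so that $X$ is a proper geodesic metric space. The full group $G$ acts on $X$ by left multiplication; this action is free and cobounded, and in particular proper. Since $L \le G$, the restricted action $L \curvearrowright X$ is still proper, and $L$ is finitely generated by hypothesis. This verifies the setup conditions of \Cref{intro:th_pulling_back} for the group $L$.

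It remains to exhibit the required stable embedding. Take $x \in X$ to be the vertex corresponding to the identity of $G$. The orbit map $\orb \colon L \to X$, $\ell \mapsto \ell \cdot x$, is then simply the inclusion of $L$ into the vertex set of $X$. Its restriction to $H$ is the inclusion $H \hookrightarrow X$ whose image is precisely the subset $H \subset G$. The hypothesis that $H$ is stable in $G$ says, by definition, that this image is a stable subset of $X$, i.e., that $\orb|_H$ is a stable embedding. \Cref{intro:th_pulling_back} then yields directly that $H$ is stable in $L$, completing the argument.

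The main conceptual point, and really the only thing to notice, is that the Cayley graph of $G$ already serves as a proper geodesic space on which $L$ acts properly, and that the hypothesis ``$H$ stable in $G$'' literally provides the stable embedding data demanded by \Cref{intro:th_pulling_back}. With this identification in place there is no genuine obstacle at the level of \Cref{intro:th_inherit}; all of the nontrivial work has been absorbed into the pulling back theorem itself, whose proof (via the middle recurrence characterization of \Cref{th:intro_middle_recurrence}) supplies the effective control needed to push a stability function on $H$ in $G$ down to one on $H$ in $L$.
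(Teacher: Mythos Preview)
Your proposal is correct and follows essentially the same approach as the paper: apply \Cref{intro:th_pulling_back} with $L$ acting properly on a Cayley graph of $G$, observing that the orbit map restricted to $H$ is a stable embedding by hypothesis. The only cosmetic difference is that the paper chooses a generating set for $G$ containing generating sets for $L$ and $H$, whereas you take an arbitrary one; this is immaterial since subgroup stability does not depend on the choice of generating set.
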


\begin{proof}
Choose a finite generating set $S$ for $G$ that includes finite generating sets for $L$ and $H$.  Then the orbit of $H$ in the corresponding Cayley graph $\Gamma(G, S)$ is stable.  Since $\Gamma(G, S)$ and the $L$-action on it are both proper, \Cref{intro:th_pulling_back} implies that $H$ is stable in $L$.
\end{proof}

 In the special case where $H$ is cyclic, \Cref{intro:th_inherit} appears as Lemma  $3.25$ in \cite{DMS}. However, this lemma relied on an error earlier in that paper; see \Cref{sec:Mid_Lit} for further discussion.
\begin{remark} \label{intro:rem_inherit_distort} 
\Cref{intro:th_inherit} is most interesting when $L$ is highly distorted in $G$. 
For example, we may take $G$ to be the mapping class group and $L$ to be the handlebody or Torelli subgroup. Similarly, if $G = \Out(F_n)$, we could take $L$ to be the Torelli subgroup.
\end{remark}

\subsubsection*{Random subgroups are stable}

Let $G$ be a finitely generated group, $k \geq 2$, and $\mu$ a probability measure on $G$ whose support generates a non-elementary semigroup.  Let $\Gamma(n) = \langle w^{1}_{n}, w^{2}_{n},...,, w^{k}_{n} \rangle$ be the subgroup generated by the $n^{th}$ step of $k$ independent random walks.

Following Taylor--Tiozzo \cite{TT16}, we say a $k$-generated random subgroup of $G$ has property $P$ if

\[ \mathbb{P}[ \Gamma(n)\mbox{ has $P$} ] \rightarrow 1 \hspace{2 mm} \mbox{as} \hspace{2 mm} n \rightarrow \infty. \] 

Combining \Cref{intro:th_mod_out} with a result of Taylor-Tiozzo \cite{TT16}, we prove random subgroups of several aforementioned groups are stable: 

\begin{corollary} \label{intro:cor_random} 
Let $G$ be $\MCG(S), \Out(F_n),$ nontrivially relatively hyperbolic, a handlebody group, or the Torelli subgroup of $\MCG(S)$ or $\Out(F_n)$.
Then a $k$-generated random subgroup of $G$ is stable.
\end{corollary}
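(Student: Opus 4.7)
The plan is, for each class of group $G$ listed in the statement, to combine a quasi-isometric orbit map for $\Gamma(n)$ into an ambient Gromov hyperbolic complex---supplied by Taylor--Tiozzo \cite{TT16}---with the appropriate stability criterion established earlier in this paper. The three ``direct'' cases $G = \MCG(S)$, $G = \Out(F_n)$, and $G$ nontrivially relatively hyperbolic will be handled by immediate application of the Taylor--Tiozzo theorem together with \Cref{intro:th_mod_out} or \Cref{intro:th_stab_rel_crit}. The remaining cases, in which $G$ is a distorted subgroup (handlebody or Torelli) of $\MCG(S)$ or $\Out(F_n)$, will be reduced to the direct case for the overgroup and then pulled back using the inheritance result \Cref{intro:th_inherit}.

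For the direct cases, $G$ acts non-elementarily on its relevant hyperbolic complex---$\C(S)$ for $\MCG(S)$, $\F_n$ for $\Out(F_n)$, and $\cusp(G, \P)$ for a relatively hyperbolic $G$---and the non-elementarity assumption on $\mu$ supplies exactly the hypothesis required by \cite{TT16}. That theorem then guarantees that with probability tending to $1$, the orbit map $\Gamma(n) \to Y$ is a quasi-isometric embedding, where $Y$ denotes the relevant hyperbolic complex. Applying \Cref{intro:th_mod_out} in the $\MCG(S)$ and $\Out(F_n)$ cases, and \Cref{intro:th_stab_rel_crit} in the relatively hyperbolic case, yields stability of $\Gamma(n)$ in $G$.

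For the remaining cases, let $G^+$ denote the overgroup ($\MCG(S)$ or $\Out(F_n)$) and $Y^+$ the corresponding hyperbolic complex ($\C(S)$ or $\F_n$). The key observation is that $G$ still contains elements acting loxodromically on $Y^+$: both the handlebody group and the surface Torelli group contain pseudo-Anosov mapping classes, and the Torelli subgroup of $\Out(F_n)$ contains fully irreducible outer automorphisms. Thus the restricted action of $G$ on $Y^+$ is non-elementary, so $\mu$ again falls within the scope of \cite{TT16} with respect to the $G^+$-action on $Y^+$. We conclude that $\Gamma(n) \hookrightarrow Y^+$ is a quasi-isometric embedding with probability tending to one, and hence $\Gamma(n)$ is stable in $G^+$ by \Cref{intro:th_mod_out}. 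A final application of \Cref{intro:th_inherit} transfers stability from $G^+$ down to $G$.

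The one point requiring care is the last step above: one must verify that non-elementarity of the support of $\mu$ inside the distorted subgroup $G$ is compatible with the Taylor--Tiozzo hypotheses phrased in terms of the action on $Y^+$. This is where the presence of loxodromic elements in $G$ (pseudo-Anosovs or fully irreducibles, as appropriate) is essential; given this, the rest of the argument is a direct chaining of the three main theorems of the paper with the probabilistic input of \cite{TT16}.
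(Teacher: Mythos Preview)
Your proposal is correct and follows the paper's strategy: apply the main theorem of \cite{TT16} to obtain a quasi-isometric orbit map into the relevant hyperbolic complex, then invoke the stability criteria of \Cref{intro:th_mod_out} and \Cref{intro:th_stab_rel_crit}. Your handling of the handlebody and Torelli cases via \Cref{intro:th_inherit} spells out what the paper's terse proof leaves implicit---the paper only cites the three direct cases explicitly and tacitly relies on the inheritance mechanism (flagged in \Cref{intro:rem_inherit_distort}) for the distorted subgroups.
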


\begin{proof}
In each case, we are given a nonelementary action $G \curvearrowright X$ on a hyperbolic space $X$ such that if $H \le G$ has a quasi-isometric orbit map into $X$, then $H$ is stable in $G$. For $G = \MCG(S)$, this follows from \cite{DT15}, for $G=\Out(F_n)$ this is \Cref{th: Out}, and for $G$ relatively hyperbolic, this is \Cref{prop:pb_relhyp}.

Now we apply the main theorem of \cite{TT16},  to conclude that a random subgroup of $G$ quasi-isometrically embeds into $X$. 
Hence, a random subgroup of $G$ is stable.
\end{proof}

For $\MCG(S)$, \Cref{intro:cor_random} is proven in \cite{TT16}, but it is novel for the other examples.

\subsection*{Stable coherence}
It follows from \Cref{intro:th_inherit} that if $H \le G$ is stable, then so is each cyclic subgroup $\langle h \rangle \le G$ for $h\in H$. 
We say that a group $G$ has the \emph{stable coherence property} if for every finitely generated, undistorted subgroup $H \le G$, $H$ is stable in $G$ whenever each of its nontrivial cyclic subgroups is stable in $G$.

\begin{question} \label{conj:local_to_global}
What groups $G$ have the stable coherence property for stability?
\end{question}

In \cite{koberda2014geometry}, Koberda--Mangahas--Taylor prove that right-angled Artin groups have the stable coherence property. In that case, the assumption that $H$ is undistorted was unnecessary to conclude stability. Recently, Bestvina--Bromberg--Kent--Leininger \cite{BBKL} proved that undistorted purely pseudo-Anosov subgroups of $\MCG(S)$ are convex-cocompact. Translating this result into our language shows that  $\MCG(S)$ also has the stable coherence property. Whether or not the undistorted assumption is necessary for subgroups of the mapping class group remains unknown. 

We thank David Hume for pointing out to us that not all finitely generated groups $G$ have the stable coherence property; for example, see \cite[Theorem 6.6]{ACGH2}. However, there are no known counterexamples when $G$ is finitely presented.

\subsection{Structure of paper}

In \Cref{sec:background}, we begin with some background on metric spaces and overview definitions and equivalent characterizations of stability.  In \Cref{sec:mr and stab}, we prove that stability and middle-recurrence are effectively equivalent.  In \Cref{sec:pbs}, we prove the main result, \Cref{intro:th_pulling_back}, that stability pulls back under proper actions for finitely generated groups on proper geodesic metric spaces.  \Cref{sec:app} holds our applications, with those to $\MCG(S)$ in \Cref{subsec:mod}, $\Out(F_n)$ in \Cref{subsec:out}, and relatively hyperbolic groups in \Cref{subsec:relhyp}.  Finally, in the Appendix (\Cref{sec:appendix}), we include a technical lemma about contracting directed geodesics in the thick part of Outer space, which is needed in \Cref{subsec:out}.
 
\subsection{Acknowledgments}
The authors thank Jeff Brock, Daniel Groves, and Yair Minsky for useful conversations and Cornelia Dru\c tu for several helpful comments. We are also grateful to Christopher Cashen for pointing out an error in an earlier version of \Cref{lem:contract} and for providing us with the correction.
The first, second, and third authors were partially supported by NSF grants DMS-1502623, DMS-1045119, and DMS-1400498, respectively.  The second and third author would also like to thank the Mathematical Sciences Research Institute for hosting them during the completion of this project. Finally, we thank the anonymous referee for helpful suggestions.

\section{Background} \label{sec:background}

Let $X$ be a geodesic metric space.  
Then $X$ is proper if closed balls are compact, and an action $G \curvearrowright X$ is proper if for each compact $K \subset X$, the set $\{g : gK\cap K \neq \emptyset \}$ is finite. By definition, a path $\gamma$ is a continuous map from an interval into $X$.
If $\gamma$ is any finite 
path in $X$, let $|\gamma|$ denote the distance between its endpoints, and let $\|\gamma\|$ denote its arclength. The \define{slope} of $\gamma$ is defined as
\[
\sl(\gamma) = \frac{\|\gamma\|}{|\gamma|}.
\]

For $X, Y$ metric spaces, $\lambda>0$ and $\kappa \ge 1$, a \emph{$(\kappa, \lambda)$-quasi-isometric embedding} of $X$ into $Y$ is a map $\phi:X \rightarrow Y$ so that for any $a,b \in X$, 
\[  
\frac{1}{\kappa} d_{X}(a,b)- \lambda \le d_{Y}(\phi(a), \phi(b)) \le \kappa \cdot d_{X}(a,b) + \lambda. \]
Finally, a \emph{$(\kappa, \lambda)$-quasigeodesic} in a metric space $Y$ 
is a $(\kappa, \lambda)$-quasi-isometric embedding of an interval into $Y$, where we allow for the possibility that the interval is infinitely long. By a $\kappa$--quasigeodesic, we mean a $(\kappa,\kappa)$--quasigeodesic.

For an injective map $i \colon X \to Y$ between metric spaces, we say that $X$ is \define{undistorted} in $Y$ if $i$ is a quasi-isometric embedding. In this case, we often identify $X$ with its image in $Y$. We will sometimes abuse our terminology and say that a subspace $X \subset Y$ is undistorted, but in this case the metric on $X$ will be clear from context. Similarly, we will often blur the distinction between a quasigeodesic and its image in $X$.

For $X$ a subspace of $Y$ and $D>0$, let $N_{D}(X)$ denote the $D$-neighborhood of $X$ in $Y$. Then given $X, X' \subseteq Y$ two subspaces of a metric space $Y$, we say that $X$ and $X'$ are \define{$D$--Hausdorff close} if $X \subset N_{D}(X')$ and $X' \subset N_{D}(X)$. 

\subsection{Stability and the sublinear contracting projections}
We begin by defining stability for subgroups of a finitely generated group $G$. We then recall the definition of sublinear contracting projections.




\begin{definition}[Stability] \label{def:stab}
Let $Y \subseteq X$ be an undistorted subspace of a metric space $X$. Then $Y$ is \define{stable} in $X$ if for any  $\kappa>1$, there exists $D>0$ so that if $q, q'$ are $\kappa$--quasigeodesics in $X$ with the same endpoints $a,b \in Y$, then $q$ and $q'$ are $D$--Hausdorff close. 
\end{definition}

A \define{stable embedding} of $Y$ into $X$ is a quasi-isometric embedding $\phi: Y \rightarrow X$ so that the image $\phi(Y)$ is stable in $X$. 

Although we have defined stability in a general setting, our focus will be the case of a finitely generated group $G$. Fix a finite generating set $S$ of $G$ and let $|\cdot|_S$ be the associated word norm. Recall that any two generating sets of $G$ give quasi-isometric metrics and that a finitely generated subgroup $H \le G$ is undistorted in $G$ when the inclusion $H \to G$ is a quasi-isometric embedding for some (any) word metrics on $H$ and $G$. 
Then a finitely generated subgroup $H \le G$ is \define{stable} if $H$ is undistorted in $G$ and $H \subset (G, |\cdot|_S)$ is stable for any choice of word metric on $H$. We note that for a pair of finitely generated groups $H \le G$, stability of $H$ in $G$ is independent of the generating sets for $H$ or $G$. Recent work on stable subgroups appears in \cite{koberda2014geometry, cordes2016stability, cordes2016boundary, cordeshume2}.\\

Our proof of \Cref{intro:th_pulling_back} uses a recent result of Arzhantseva--Cashen--Gruber--Hume \cite{ACGH} which characterizes stable quasigeodesics using a contracting property for the nearest point projection. We follow their discussion closely.

Let $X$ be a geodesic metric space and $Y\subset X$ a subspace. For $\epsilon >0$, the $\epsilon$-closest point projection $\pi_Y^\epsilon: X \rightarrow 2^Y$ maps a point $x \in X$ to the set of points of $Y$ whose distance to $x$ is within $\epsilon$ of $d(x,Y)$:
\[
\pi_Y^\epsilon(x) = \{y \in Y : d(x,y) \le d(x,Y) + \epsilon \}.
\]
Let $\rho$ be a sublinear function, i.e. a function which is nondecreasing, eventually nonnegative, and for which $\lim_{r \to \infty}\frac{\rho(r)}{r} = 0$. Then given $\epsilon>0$, $Y \subset X$ is \define{$(\rho, \epsilon)$--contracting} if 
for all $x,x' \in X$, $d(x,x')\le d(x,Y)$ implies that 
\[
\diam \left( \pi_Y^\epsilon(x) \bigcup \pi_Y^\epsilon(x') \right) \le \rho \big(d(x,Y) \big).
\]
We also say that $Y$ is \emph{sublinearly contracting} with \emph{contracting function} $\rho$. For our applications, $Y$ will always be the image of a quasigeodesic in $X$ and we call the resulting quasigeodesic sublinearly contracting. One of the key results of \cite{ACGH} is the following:

\begin{proposition}[Proposition 4.2 of \cite{ACGH}] \label{prop:stab_contract}
For each stability function $D$ and $\epsilon>0$ there is a sublinear function $\rho$ such that if $Y$ is a $D$-stable subspace of a geodesic metric space $X$, then $Y$ is $(\rho, \epsilon)$--contracting. 
\end{proposition}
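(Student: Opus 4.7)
My plan is to establish the contrapositive effectively: if the projection diameter $L = \diam(\pi_Y^\epsilon(x) \cup \pi_Y^\epsilon(x'))$ is large relative to $d = d(x, Y)$, then the stability of $Y$ is violated. First, pick $y \in \pi_Y^\epsilon(x)$ and $y' \in \pi_Y^\epsilon(x')$, and form the concatenation
\[
p = [y, x] \cdot [x, x'] \cdot [x', y']
\]
of three $X$-geodesic segments. Using $d(x,x') \le d$ and hence $d(x', Y) \le 2d + \epsilon$, the path has arclength $\|p\| \le 4d + 4\epsilon$ and runs from $y \in Y$ to $y' \in Y$ with $d(y,y') = L$.

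Next, I would verify that $p$ is a $\kappa$-quasigeodesic with $\kappa$ controlled in terms of $d/L$. The key geometric observation is that along the outer segments the distance to $Y$ is essentially linear: for $u \in [y, x]$, the triangle inequality together with $y$ being an $\epsilon$-projection of $x$ forces $d(u, Y) = d(u, y)$ up to an additive $\epsilon$, and symmetrically on $[x', y']$; on the middle segment one has $d(u, Y) \ge d - \min(d(u, x), d(u, x'))$. From these, for $u$ on one outer segment and $w$ on the other, any points of $\pi_Y^\epsilon(u)$ and $\pi_Y^\epsilon(w)$ are respectively close to $y$ and $y'$, so the triangle inequality forces $d(u, w) \ge L - O(d)$. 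Combined with the trivial case of subpaths within a single segment, a direct check shows $p$ is a $\kappa$-quasigeodesic with $\kappa$ of order $1 + d/L$.

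Now $D$-stability applies: $p$ and any geodesic $[y, y']$ in $X$ are $\kappa$-quasigeodesics with endpoints in $Y$, so they are $D(\kappa)$-Hausdorff close. The geodesic $[y, y']$ itself lies within a bounded neighborhood of $Y$, by a constant $C_0$ depending only on the undistortion of $Y$ (applying stability once more to compare $[y,y']$ with a quasigeodesic in $Y$ from $y$ to $y'$ whose constants are determined by the undistortion). Combining, $x \in p$ satisfies
\[
d \le D(\kappa) + C_0.
\]
Since $\kappa \lesssim 1 + d/L$, this inverts to an inequality of the form $L \le C \cdot d / D^{-1}(d - C_0)$, where $D^{-1}(r) := \sup\{\kappa : D(\kappa) \le r\}$. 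Either $D$ is bounded, in which case $d$ itself is uniformly bounded and any constant $\rho$ works, or $D^{-1}(r) \to \infty$, making this upper bound sublinear in $d$. Choosing $\rho$ to be a nondecreasing sublinear majorization of the resulting bound completes the proof.

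The main obstacle will be verifying the quasigeodesic constants of $p$: a concatenation of three geodesic segments is not automatically a quasigeodesic, and subpaths crossing the corners at $x$ and $x'$ require nontrivial lower bounds on endpoint distance. The projection geometry is precisely what supplies these bounds, since points on distinct outer segments have near-closest points in $Y$ separated by roughly $L$. Carefully tracking the additive $\epsilon$-errors in the projection and in the triangle inequalities is the delicate bookkeeping that makes the relationship between the stability function and the contracting function effective.
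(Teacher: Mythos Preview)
The paper does not prove this proposition; it is quoted as Proposition~4.2 of \cite{ACGH} and used as a black box. So there is no ``paper's own proof'' to compare against, and your attempt must be judged on its own merits.

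Your overall strategy---build a broken geodesic $p$ from $y$ to $y'$ through $x,x'$, bound its quasigeodesic constants in terms of $d/L$, and then invoke $D$-stability to force $d \le D(\kappa)+C_0$---is natural, and the inversion at the end is fine. The gap is in the step you yourself flag as the main obstacle: verifying that $p$ is a $\kappa$-quasigeodesic with $\kappa$ of order $1+d/L$.

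Your justification is that for $u$ on $[y,x]$ and $w$ on $[x',y']$ one has $d(u,w)\ge L-O(d)$. This bound is correct but vacuous: by the triangle inequality $L\le d(y,x)+d(x,x')+d(x',y')\le 4d+3\epsilon$, so $L-O(d)$ is typically negative. Even the sharper bound $d(u,w)\ge L-d(u,y)-d(w,y')$ (which does follow from your projection observation) is not enough. Concretely, suppose $x=x'$ and the two geodesics $[y,x]$ and $[x,y']$ share an initial segment of length $\ell$ at $x$; nothing in the hypotheses rules this out, and the triangle inequality only gives $\ell \le d+\epsilon-L/2$. Then $p$ contains a subpath of arclength $2\ell$ with endpoints at distance $0$, so any $(\kappa,\kappa)$-quasigeodesic constant must satisfy $\kappa\ge\sqrt{2\ell}$, which can be of order $\sqrt{d}$ even when $L/d$ is bounded below. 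That is not $O(1+d/L)$. Plugging $\kappa\asymp\sqrt{d}$ (or the honest additive constant $\lambda\asymp d$) into $d\le D(\kappa)+C_0$ yields no contradiction once $D$ grows, say, quadratically---as it can in hyperbolic spaces.

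So the quasigeodesic claim, as stated and argued, does not hold in a general geodesic metric space using only $d(x,x')\le d(x,Y)$ and $y,y'\in\pi_Y^\epsilon$; some further input from the stability of $Y$ is needed to control the backtracking, and it is not clear how to feed this in without circularity. The proof in \cite{ACGH} takes a different route.
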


\noindent Finally, we say a subspace $Y$ is \emph{$\rho$--contracting} if it is $(\rho,1)$--contracting.

We conclude this section with a useful fact that was pointed out to us by Christopher Cashen. If $\rho$ is a sublinear function, then by setting $\bar \rho(r) = r \cdot \sup_{s\ge r} \frac{\rho(s)}{s}$ we obtain a new sublinear function such that $\rho(r) \le \bar\rho(r)$ for all $r$ having the property that $\bar \rho(r)/r$ is nonincreasing. 

\section{Middle recurrence and stability} \label{sec:mr and stab}

For a path $\gamma$ in $X$, we sometimes write $a \in \gamma$ to mean $a \in \im(\gamma)$, the image of $\gamma$. For $a,b \in \gamma$, we say that $x\in \gamma$ \emph{lies between} $a,b$ if there are $s_1 \le s_2 \le s_3$ in the domain of $\gamma$ with $\gamma(s_1) =a, \gamma(s_2) =x, \gamma(s_3)=b$.
For $t\in (0,1/2)$, the \define{$t$--middle of $\gamma_{[a,b]}$} is the set of $x \in \gamma$ lying between $a$ and $b$ such that $\min \{d(x,a),d(x,b)\} \ge t\cdot d(a,b)$. We denote the $t$--middle of $\gamma_{[a,b]}$ by $\gamma_{\mathbf{t}[a,b]}$. If $\gamma$ is defined on a finite length interval, then its $t$--middle is denoted simply $\gamma_\mathbf{t}$.

\begin{figure}[htbp]
\begin{center}
\includegraphics[width = .5 \textwidth]{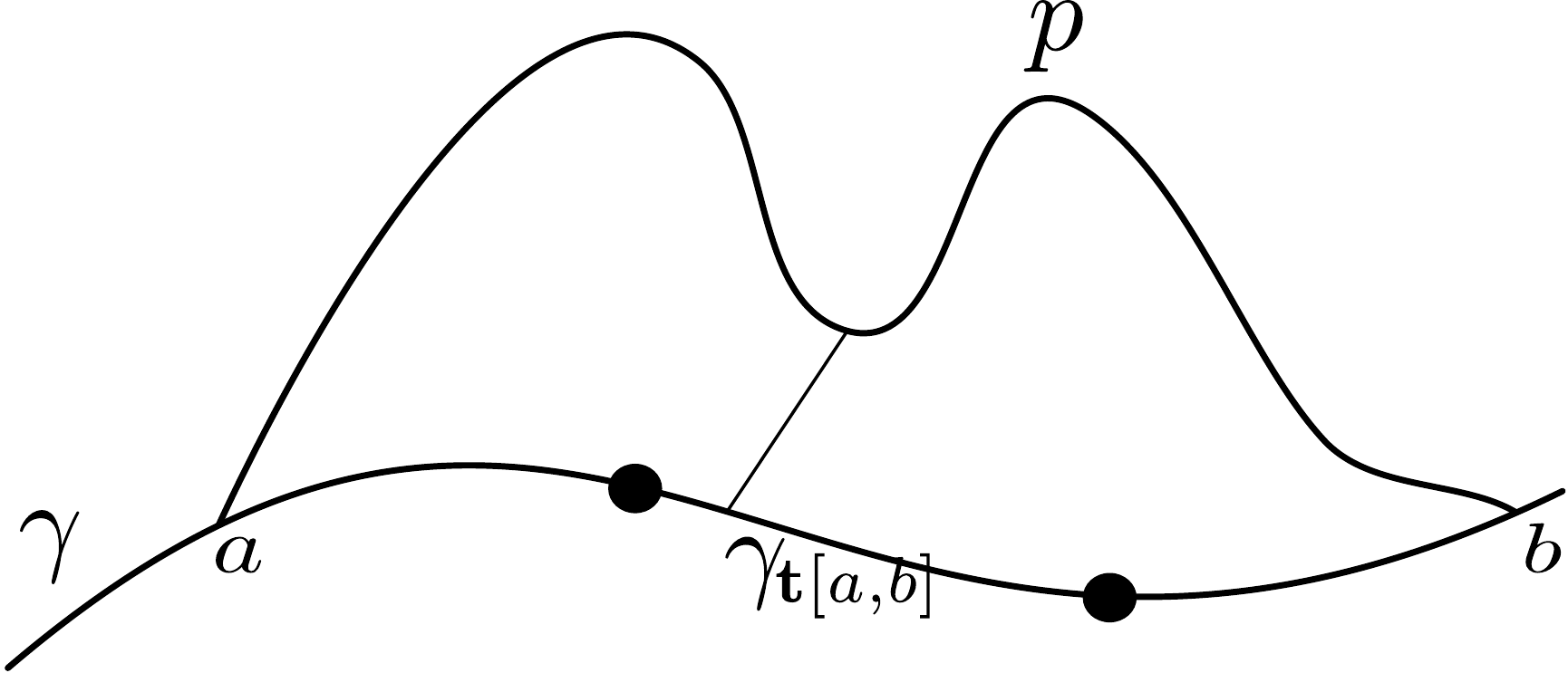}
\caption{The path $p$ returns to the $t$-middle of the path $\gamma$. In general, $\gamma_{\mathbf{t}[a,b]}$ need not be connected.}
\label{Fig:MR}
\end{center}
\end{figure}

We say that the path (or quasigeodesic) $\gamma$ is \define{t--middle recurrent} if there is a function $\m_{t,\gamma} \colon \R_+ \to \R_+$ so that for any path $p$ with endpoints $a,b \in \gamma$ satisfying $\|p\|\le C \cdot d(a,b)$, we have 
\[
p \cap N_{\m_{t,\gamma}(C)}(\gamma_{\mathbf{t[a,b]}}) \neq \emptyset.
\] 
The function $\m_{t,\gamma} \colon \R_+ \to \R_+$ is called the \define{$t$-recurrence function} of the path $\gamma$. The idea is simply that any path with controlled slope and endpoints on $\gamma$ must return to a bounded diameter neighborhood of the middle portion of $\gamma$. See \Cref{Fig:MR}.

When $\gamma$ is $t$--middle recurrent for each $t \in (0,1/2)$, we say that $\gamma$ is \define{middle recurrent}.

\subsection{Middle recurrent implies stable}

We now prove what is essentially one direction of the effective equivalence between the middle recurrence and stability conditions (\Cref{th:intro_middle_recurrence}):

\begin{theorem}
\label{th:MiddleRecurrence_stable}
Let $\gamma$ be a continuous path which satisfies $t$-middle recurrence for some $0<t < 1/2$ with recurrence function $\m_{t,\gamma}  \colon \R_+ \to \R_+$.  For any $\kappa \ge 1$, if $q$ is a $\kappa$-quasigeodesic with endpoints on $\gamma$, then 
$$q \subset N_{R}(\gamma),$$
for a constant $R \ge 0$ depending only on $\m_{t,\gamma}  \colon \R_+ \to \R_+$ and $\kappa$.
\end{theorem}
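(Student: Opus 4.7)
The plan is to bound the depth of $q$ via a maximal-excursion argument combined with middle recurrence, scaling the excursion threshold with the depth itself. By a standard taming procedure we may assume $q$ is continuous, at the cost of slightly worse constants depending only on $\kappa$ and an additive $c(\kappa)$ absorbed into the final neighborhood radius. Let $R_0 = \sup_{x \in q} d(x, \gamma)$, realized up to arbitrary slack at some $x_0 \in q$; the goal is to bound $R_0$ in terms of $\kappa$, $t$, and $\m_{t,\gamma}$, so we may assume $R_0$ is large relative to any fixed constants.

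For $C = C(\kappa, t)$ to be chosen, set $R' = R_0/C$. By continuity of $s \mapsto d(q(s), \gamma)$, there is a maximal subinterval of the domain of $q$ containing the preimage of $x_0$ on which this distance exceeds $R'$; let $q'$ be the corresponding subpath, with endpoints $e_1, e_2$ satisfying $d(e_i, \gamma) = R'$. Choose $a', b' \in \gamma$ realizing these closest-point distances, and form the continuous path
\[
p' := [a', e_1] \cup q' \cup [e_2, b']
\]
from $a'$ to $b'$, both on $\gamma$.

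The key step is to force $d(a', b') \le 2R'/t$. Using $\|q'\| \le K(d(a', b') + 2R' + 1)$ for a constant $K = K(\kappa)$, a direct computation shows that when $d(a', b') \ge R'$ we have $\sl(p') \le S$ for an explicit $S = S(\kappa)$ (e.g.\ $S = 3K+3$). Set $M := \m_{t,\gamma}(S)$ and assume $R' \ge M$. Middle recurrence produces $y \in p'$ within $M$ of some $z \in \gamma_{\mathbf{t}[a', b']}$. Since $q'$ lies outside $N_{R'}(\gamma)$ and $R' \ge M$, the point $y$ cannot belong to $q'$; hence $y$ lies on a connector segment, say $[a', e_1]$, and the $t$-middle condition gives
\[
t \cdot d(a', b') \le d(z, a') \le d(z, y) + d(y, a') \le M + R',
\]
whence $d(a', b') \le 2R'/t$. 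If instead $d(a', b') < R'$, the bound is trivial since $t < 1$.

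With $d(a', b') \le 2R'/t$, the arclength estimate yields $\|q'\| \le 2KR'(1 + 1/t) + K$, and since any point of $q'$ lies within $\|q'\|$ of $e_1 \in N_{R'}(\gamma)$, we obtain
\[
R_0 \le \|q'\| + R' \le R' \bigl[\, 2K(1 + 1/t) + 1 \,\bigr] + K.
\]
Choosing $C$ to be twice the bracketed constant reduces this to $R_0 \le R_0/2 + K$, i.e.\ $R_0 \le 2K$, provided $R' \ge M$ (equivalently $R_0 \ge CM$); otherwise $R_0 < CM$ already bounds the depth. In either case $R_0 \le \max(2K, CM)$, giving the desired $R = R(\kappa, \m_{t,\gamma})$. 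The main obstacle is the case analysis for where the middle-recurrence point $y$ lies on $p'$: it is precisely the $t$-middle condition that, once $y$ is forced off $q'$, confines $d(a', b')$ to be small and so prevents a long deep excursion; the linear-in-$R_0$ choice of $R'$ is what makes the self-referential estimate close.
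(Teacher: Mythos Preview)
Your argument is correct, but it takes a different route from the paper's. The paper fixes the threshold $d=\max(\m_{t,\gamma}(4\kappa'+1),1)$ \emph{a priori} and proves a ``recurrence'' claim: whenever $q(a')$ and $q(b')$ are both $d$--close to $\gamma$ and $d(q(a'),q(b'))\ge 4d/t$, the concatenated path $[x,q(a')]\cdot q|_{[a',b']}\cdot[q(b'),y]$ has slope $\le 4\kappa'+1$, and the middle-recurrence witness is shown to lie on $q$ itself (it cannot lie on a short connector because that would force the $t$--middle point too close to an endpoint). Hence the set $\{s:d(q(s),\gamma)\le d\}$ has gaps of bounded $q$--length, and $R$ follows directly.

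You instead run a bootstrap: take the deepest excursion, set the threshold $R'=R_0/C$ proportionally, and use middle recurrence with the \emph{opposite} conclusion---the witness cannot lie on $q'$ because $q'$ is outside $N_{R'}(\gamma)$---forcing it onto a connector and bounding $d(a',b')\le 2R'/t$. Feeding this back through the tame arclength bound closes the self-referential inequality $R_0\le R_0/2+K$. Both approaches hinge on the same dichotomy (is the recurrence point on the quasigeodesic or on a connector?), just exploited in opposite directions. The paper's version is a bit more direct since the threshold is fixed from the outset and no bootstrapping is needed; your version has the minor advantage that it isolates the maximal excursion once and for all rather than arguing gap-by-gap. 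One small point: your slope bound $S$ implicitly uses $R'\ge 1$ (to control the $K/R'$ term); this is harmless since you are already assuming $R_0$ large, but it is worth stating alongside the hypothesis $R'\ge M$.
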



\begin{proof}
By replacing $q$ with a \emph{tame} quasigeodesic, as in \cite[Lemma III.H.1.11]{BridsonHaefliger:book}, we may assume that $q$ satisfies the inequality 
\[
||q|_{[a,b]}|| \le \kappa' d(q(a),q(b)) + \kappa',
\]
for all $a,b \in \mathrm{dom}(q)$ (the domain of $q$) and for some $\kappa'$ depending only on $\kappa$.

Set $d= \max(\m_{t,\gamma}(4\kappa'+1), 1)$.  We claim the following: 
if $[a',b']$ is a subinterval of the domain of $q$ such that $d(q(a'),\gamma),\: d(q(b'),\gamma)\leq d$ and $d(q(a'),q(b'))\ge 4 \cdot d/t$, then there is a point $c' \in [a',b']$ such that $d(q(c'), \gamma) \leq d$.

Before proving the claim, we show how it completes the proof.  Let $Q = \{x \in \mathrm{dom}(q) : d(q(x),\gamma)\leq d\}$. The claim implies that the complement of $Q$ in $\mathrm{dom}(q)$ is a collection of open intervals each of whose image in $X$ has length at most $4\kappa' \cdot d/t + \kappa'$. Hence, $q \subset N_{d+4\kappa'd/t +\kappa'}(\gamma)$, as required. 

To prove the claim, let $x,y \in \gamma$ such that $d(x,q(a')), d(y,q(b')) \leq d$. Let $\omega$ be the concatenation $ [x,q(a')] \cdot h \cdot [q(b'),y]$, where $h$ is the restriction of $q$ to the interval $[a',b']$. We compute the slope of $\omega$:

\begin{align*}
\frac{\| \omega \|}{d(x,y)} &\le \frac{\| h \| +2d}{d(q(a'),q(b'))-2d} \\
&\le \frac{\kappa' d(q(a'),q(b')) + \kappa' +2d}{1/2d(q(a'),q(b'))} \\
&\le \frac{\kappa' d(q(a'),q(b')) + \kappa' +1/2d(q(a'),q(b'))}{1/2d(q(a'),q(b'))}\\
&\le 4\kappa' + 1
\end{align*}

Thus, by middle recurrence, there is a point $c' \in \mathrm{dom}(\omega)$ and a point $z \in \gamma_{\mathbf{t}[x,y]}$ such that $d(\omega(c'),z)\le d$. Note that by definition of the $t$-middle, $d(z,x),d(z,y)\geq t\cdot d(x,y)$. To finish the claim, it suffices to show that $\omega(c')$ is in the image of $q$. Otherwise, $\omega(c')$ is contained in either $[x,q(a')]$ or $[q(b'),y]$. Suppose towards a contradiction that $\omega(c') \in [x,q(a')]$. Then
$$ t\cdot d(x,y) \le d(z,x) \le d(z,\omega(c')) + d(\omega(c'),x) \le 2d$$
and so $d(q(a'),q(b')) \le 2d/t  + 2d = 2d(1+1/t) <4 d/t$, a contradiction, completing the proof.
%
%
%
%
%
%
\end{proof}

\subsection{Stable implies middle recurrent}
Having established \Cref{th:MiddleRecurrence_stable}, it remains to prove that stable quasigeodesics are middle recurrent.

\begin{theorem}
\label{th:S_MT}
For a given $t\in (0,1/2)$, stability function $D \colon \mathbb{R}^{2}_{+} \rightarrow \mathbb{R}_{+}$, $\kappa \ge 1$, $\lambda \ge 0$, and Lipschitz constant $C \geq 1$, there exists $M= M(t,D,C, \kappa, \lambda)  \ge 0$ satisfying the following: If $\gamma$ is a stable $(\kappa, \lambda)$-quasigeodesic in a geodesic metric space $X$ with stability function $D$, and $p$ is a path with endpoints $a,b \in \gamma$ satisfying $\|p\|<C \cdot d(a,b)$, then
$$p \cap N_{M}(\gamma_{\mathbf{t[a,b]}}) \neq \emptyset.$$ 
\end{theorem}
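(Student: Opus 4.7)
The plan is to invoke Proposition~\ref{prop:stab_contract} to upgrade the stability hypothesis on $\gamma$ into a sublinear contracting function $\rho$, then track the closest-point projection $y_i \in \pi_\gamma^1(p(s_i))$ along a discretization of $p$ and show that it must enter the middle $\mu := \gamma_{\mathbf{t}[a,b]}$ at a point whose preimage in $p$ lies within a bounded distance of $\mu$.

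First, apply Proposition~\ref{prop:stab_contract} with $\epsilon = 1$ to obtain a sublinear $\rho$ such that $\gamma$ is $(\rho,1)$-contracting; by replacing $\rho$ with $\bar\rho(r) = r \sup_{s \ge r}\rho(s)/s$, we may assume that $\rho(r)/r$ is non-increasing. For $d(a,b) \le R_0$, with $R_0 = R_0(t, C, \kappa, \lambda)$ to be chosen later, the conclusion is immediate: the subarc $\gamma_{[a,b]}$ has length at most $\kappa d(a,b) + \lambda$, so $\mu \subset N_{\kappa R_0 + \lambda}(a)$, and $a \in p$. Otherwise, parametrize $p$ by arclength as $p:[0,L] \to X$ with $L \le C d(a,b)$, and define $s_0 = 0$, $s_{i+1} = \min(s_i + \max(r_i, 1), L)$, where $r_i := d(p(s_i), \gamma)$. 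Choose $y_i \in \pi_\gamma^1(p(s_i))$. When $r_i \ge 1$ we have $d(p(s_i), p(s_{i+1})) \le r_i$, so the contracting property yields $d(y_i, y_{i+1}) \le \rho(r_i)$; when $r_i < 1$, the triangle inequality gives $d(y_i, y_{i+1}) \le 6$. Hence
\[
d(y_i, y_{i+1}) \;\le\; \max\bigl(6,\, \rho(r_i)\bigr).
\]

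Write $A = \{y \in \gamma : d(y,a) < t \cdot d(a,b)\}$ and $B = \{y \in \gamma : d(y,b) < t \cdot d(a,b)\}$, so that $\mu = \gamma \setminus (A \cup B)$. A single jump $y_i \to y_{i+1}$ from $A$ directly to $B$ (or vice versa) would force $d(y_i, y_{i+1}) > (1-2t)\, d(a,b)$ by the triangle inequality. Since $r_i \le L \le C d(a,b)$ and $\rho$ is sublinear, we can enlarge $R_0$ to guarantee $\max(6, \rho(Cd(a,b))) < (1-2t) d(a,b)$ for every $d(a,b) > R_0$, forbidding any such jump. As $y_0 = a \in A$ and $y_N = b \in B$, some $y_{i^*}$ must lie in $\mu$.

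The principal obstacle is to promote ``$y_{i^*} \in \mu$'' into ``$d(p(s_{i^*}), \mu) \le M$'', which amounts to bounding $r_{i^*}$ by a constant depending only on $\rho, C, t, \kappa, \lambda$. Taking $i^*$ minimal with $y_{i^*} \in \mu$, the projections $y_0, \ldots, y_{i^*-1}$ all lie in $A$, so their total displacement is at least $t \cdot d(a,b)$, while the corresponding step sizes $\max(r_i,1)$ sum to $s_{i^*} \le C d(a,b)$. Because $\rho(r)/r$ is non-increasing, each unit of arclength spent at height $r \ge 1$ contributes at most $\rho(r)/r$ units of projection travel; the short-step contribution is controlled by the number of such steps. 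Balancing the required $t d(a,b)$ of projection travel against the arclength budget $C d(a,b)$ and exploiting the sublinear decay of $\rho(r)/r$ shows that a large $r_{i^*}$ would force the accumulated projection travel to fall short of $t d(a,b)$, a contradiction; this yields $r_{i^*} \le M_0$ for some $M_0 = M_0(\rho, C, t, \kappa, \lambda)$. Then $d(p(s_{i^*}), \mu) \le r_{i^*} + 1 \le M_0 + 1$, and the theorem follows with $M = \max(\kappa R_0 + \lambda,\, M_0 + 1)$.
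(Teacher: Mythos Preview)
Your approach---discretize $p$ and track closest-point projections $y_i$---is natural and differs from the paper's argument, which instead isolates a maximal subarc $h \subset p$ lying outside the $K$-neighbor\-hood of all of $\gamma$, shows that some such $h$ has endpoints near opposite ends of $\gamma$, bounds $|h|$ from below by a separation estimate, and then applies Lemma~\ref{lem:contract} to force $\rho(K)/K$ bounded away from zero, hence $K$ bounded. However, your argument has a genuine gap at the decisive step.

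The asserted bound on $r_{i^*}$ does not follow from what you have written. Your two inequalities, $\sum_{j<i^*} \max(r_j,1) \le C\,d(a,b)$ and $\sum_{j<i^*} d(y_j,y_{j+1}) \ge t\,d(a,b) - O(1)$, do not constrain any individual $r_j$. Concretely: let $p$ run along $\gamma$ from $a$ to a point $c$ with $d(a,c) = t\,d(a,b) - \epsilon$, then move perpendicularly away from $\gamma$ out to height $R$, then take one step at height $R$ so that the projection slips from $A$ into $\mu$. The projection travel up to $i^*$ is $\approx t\,d(a,b)$ (accumulated cheaply while $p$ lay on $\gamma$), the arclength used is $\approx t\,d(a,b) + 2R$, and both of your inequalities are satisfied for any $R \le \tfrac{1}{2}(C-t)\,d(a,b)$---yet $r_{i^*} \approx R$ grows without bound as $d(a,b) \to \infty$. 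What actually forces $R$ to be small is the \emph{return} portion of $p$: after the projection enters $\mu$ at height $R$ it must still cover a further $(1-t)\,d(a,b)$ to reach $b$, and at height at least $K$ each unit of arclength buys at most $\rho(K)/K$ of projection travel, so the remaining arclength budget forces $\rho(K)/K \gtrsim (1-t)/C$. Your argument, looking only at indices $j < i^*$, never sees this constraint; the paper's Lemma~\ref{lem:contract}, applied to a subarc $h$ that straddles the middle while staying uniformly far from $\gamma$, is exactly the device that captures it.

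There is also a secondary issue: your $A, B$ are metric balls in $\gamma$ about $a$ and $b$, so $\mu = \gamma \setminus (A \cup B)$ contains the tails of $\gamma$ beyond $a$ and $b$, which are not in $\gamma_{\mathbf{t}[a,b]}$. Thus even a correct bound on $r_{i^*}$ would only place $p(s_{i^*})$ near your $\mu$, not near the actual $t$-middle. The paper handles this by absorbing the tails into $\gamma_{\mathbf{l}}$ and $\gamma_{\mathbf{r}}$ and proving that these two sets are quantitatively separated; this is precisely where the $(\kappa,\lambda)$-quasigeodesic hypothesis on $\gamma$ enters, and your proof never uses it.
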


The proof of \Cref{th:S_MT} will require the following lemma.

\begin{lemma} \label{lem:contract}
Assume that $\rho$ is a sublinear function such that $\rho(r)/r$ is nonincreasing, and
let $\gamma$ be a $\rho$--contracting quasigeodesic.  Suppose that $h$ is a path at distance at least $K\ge0$ from $\gamma$ whose endpoints have distance exactly $K$ from $\gamma$.
Then
\[ 
\frac{\rho(K)}{K} \ge \frac{1 - (2K+ \rho(K))/|h|}{\sl(h)}. 
\]
\end{lemma}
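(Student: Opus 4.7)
The plan is to bound the distance between closest-point projections of the endpoints of $h$ by chaining the sublinear contracting estimate along $h$. Write $x_0,x_1$ for the endpoints of $h$ and choose $y_0,y_1\in\gamma$ with $d(x_i,y_i)=K$, which exist since the infimum distance is achieved at each endpoint; then $|h|\le 2K+d(y_0,y_1)$. One may assume $|h|>2K+\rho(K)$ (with $K>0$), since otherwise the right-hand side of the claimed inequality is nonpositive and there is nothing to prove.

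The key step will be an adaptive subdivision of $h$ in which each sub-arc has arclength equal to the distance from its starting point to $\gamma$. Explicitly, set $z_0=x_0$, $s_0=0$, and inductively define $s_{i+1}=s_i+d(z_i,\gamma)$ and $z_{i+1}=h(s_{i+1})$ as long as $s_{i+1}\le\|h\|$; if the proposed $s_{i+1}$ exceeds $\|h\|$, instead set $z_{i+1}=x_1$ and terminate with $N:=i+1$. Writing $d_i:=d(z_i,\gamma)$, one has $d_i\ge K$ everywhere (since $h$ stays at distance at least $K$ from $\gamma$), and for every $i<N$ the chord satisfies $d(z_i,z_{i+1})\le d_i=d(z_i,\gamma)$. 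Hence the contracting hypothesis is met at each step and produces
\[
d(y_0,y_1)\le \sum_{i=0}^{N-1}\rho(d_i).
\]

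To bound this sum, one uses that $\rho(r)/r$ is nonincreasing, giving $\rho(d_i)\le (\rho(K)/K)\,d_i$ for each $i$. The first $N-1$ sub-arcs have arclengths summing to exactly $s_{N-1}=\|h\|-r$, where $r:=\|h\|-s_{N-1}\le d_{N-1}$ is the length of the truncated final segment. The last term is handled by the key observation
\[
d_{N-1}\le d(z_{N-1},x_1)+d(x_1,\gamma)\le r+K,
\]
so that $\rho(d_{N-1})\le (\rho(K)/K)(r+K)$. Summing then yields
\[
d(y_0,y_1)\le \frac{\rho(K)}{K}(\|h\|-r)+\frac{\rho(K)}{K}(r+K)=\frac{\rho(K)}{K}\|h\|+\rho(K),
\]
and inserting into $|h|\le 2K+d(y_0,y_1)$ gives $|h|\le 2K+\rho(K)+(\rho(K)/K)\|h\|$, which rearranges to the claimed inequality.

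The hard part is handling the truncated final sub-arc without losing control: the precise constant $2K+\rho(K)$ appearing in the statement is exactly what the bound $d_{N-1}\le r+K$ buys, and this uses crucially that $x_1$ lies at distance \emph{exactly} $K$ from $\gamma$. The supplementary hypothesis that $\rho(r)/r$ is nonincreasing (recorded at the end of Section~\ref{sec:background}) is what converts each contracting estimate $\rho(d_i)$ into something proportional to the corresponding sub-arc length, ensuring the sum telescopes against $\|h\|$ rather than the potentially much larger $\sum_i d_i$.
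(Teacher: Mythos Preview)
Your proof is correct and follows essentially the same approach as the paper's: both subdivide $h$ into pieces whose ``size'' equals the distance of the starting point to $\gamma$, apply the contracting property to each piece, and then convert $\sum\rho(d_i)$ into $(\rho(K)/K)\sum d_i$ using monotonicity of $\rho(r)/r$, finally bounding $\sum d_i\le \|h\|+K$ via the endpoint condition $d(x_1,\gamma)=K$. The only cosmetic difference is that the paper subdivides by taking maximal sub-arcs contained in the closed $r_i$--ball about $x_i$ (so $|h_i|=r_i$), whereas you subdivide by arclength (so $\|h_i\|=d_i$, hence $|h_i|\le d_i$); both lead to the identical inequality $|h|\le 2K+\rho(K)+(\rho(K)/K)\|h\|$.
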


\begin{proof}
We begin by decomposing $h$ into a sequence of subpaths $h_1, \ldots h_m$ such that 
\begin{enumerate}
\item for $1\le i <m$, the initial endpoint $x_i$ of $h_i$ is such that $|h_i| = d(x_i,\gamma)$, and 
\item for $1\le i \le m$, $h_i$ is contained in the $d(x_i,\gamma)$--ball about $x_i$.
\end{enumerate}
Note that the terminal endpoint of $h_i$ is $x_{i+1}$ for $i <m$, and set $r_i = d(x_i,\gamma)$ for each $1 \le i \le m$. Since the terminal endpoint of $h$ has distance $K$ from $\gamma$, $r_m \le |h_m| + K$.

Such a decomposition of $h$ can be obtained inductively as follows: $h_1$ is the maximal initial subpath along the parameterization of $h$ contained in the closed ball of radius $r_1=K$ about $x_1$, the initial endpoint of $h$. Now suppose that $h_1, \ldots, h_{j-1}$ have been defined and let $x_{j}$ be the terminal endpoint of the path $h_{j-1}$. Set $h_{j}$ to be the maximal initial subpath of $h$ along the parameterization starting at $x_{j}$ which is contained in the closed ball of radius $r_j= d(x_j,\gamma)$ about $x_j$. This construction, along with continuity of $h$, gives a decomposition with the required properties. 

Using that $\gamma$ is $\rho$--contracting, we see that the diameter of the projection of $h_{i}$ to $\gamma$ is no more than $\rho(r_i)$.  So by concatenating a geodesic from the initial endpoint of $h$ to $\gamma$, the images of the projections of our subpaths, and a geodesic from $\gamma$ to the terminal endpoint of $h$, we obtain that
\[
|h| \le 2K + \sum_{i=1}^m \rho(r_i).
\]

As $r_i \ge K$ for each $i$ and $\rho(r)/r$ is nonincreasing, $\rho(r_i) \le r_i \cdot \rho(K)/K$. Hence the above inequality implies that 
\[
|h| \le 2K + \left (\rho(K) / K \right )\sum_{i=1}^m r_i .
\]
Now using that $|h_i| = r_i$ (for $i<m$), $r_m \le |h_m| + K$, and $\sum |h_i| \le \sum ||h_i|| = ||h||$, we get
\begin{align*}
|h| &\le 2K + \left (\rho(K) / K \right ) (||h||+K) \\
&= 2K + \rho(K) + \left (\rho(K) / K \right ) ||h|| \\
&= 2K + \rho(K) + \left (\rho(K) / K \right ) \sl(h) \cdot |h|.
\end{align*}
A rearrangement of the above yields
\[ \frac{\rho(K)}{K} \ge \frac{|h| - (2K+ \rho(K))}{|h| \cdot \sl(h)}, \]
and gives the required inequality.

%
%
\end{proof}

%
%
%
%
%
%
%

We now prove \Cref{th:S_MT}:

\begin{proof}
Fix $t \in (0, 1/2)$.
Let $\gamma \colon I \rightarrow X$ be a $D$-stable $(\kappa, \lambda)$-quasigeodesic. 
By \Cref{prop:stab_contract}, there is a function $\rho$, depending only on $D$, so that $\gamma$ is $\rho$--contracting. As in the paragraph following \Cref{prop:stab_contract}, we may assume $\rho$ is chosen so that $\rho(r)/r$ is nonincreasing.
Now fix $C \ge 1$ and let $L\ge0$ be such that there is a path $p$  with endpoints $a,b \in \gamma$ such that $\|p\| \le C |p|$ and 
$$p \cap N_{L}(\gamma_{\mathbf{t}[a,b]}) = \emptyset.$$
We prove that $L$ is uniformly bounded. Let $\gamma_{\mathbf{l}}$ (resp. $\gamma_{\mathbf{r}}$) be the set of all points $x \in \gamma$ so that either $x$ is to the left of $a$ in the orientation of $\gamma$ (resp. to the right of $b$), or $x \in \gamma_{[a,b]}$ and $d(x,a) < t \cdot d(a,b)$ (resp. $d(x,b) < t \cdot d(a,b)$). Note that we are free to assume that $d(a,b) = |p|> 4\lambda \kappa^2/(1-2t)$.

Since $\gamma_{\textbf{l}}$, $\gamma_{\textbf{r}}$, and $\gamma_{\mathbf{t}[a,b]}$ are pairwise disjoint by construction, 
\begin{equation} \label{union}
\gamma= \gamma_{\mathbf{l}} \sqcup \gamma_{\mathbf{t}[a,b]} \sqcup \gamma_{\mathbf{r}}. 
\end{equation}
We note that $\gamma_{\textbf{l}}$, $\gamma_{\textbf{r}}$, and $\gamma_{\mathbf{t}[a,b]}$ may each be disconnected, but this will not matter in what follows.

\begin{claim}\label{claim1}
For any $x,y \in I$ with $\gamma(x) \in \gamma_{\mathbf{l}}$ and $\gamma(y) \in \gamma_{\mathbf{r}}$, 
\begin{equation} \label{fatmiddle}
 d(\gamma(x),\gamma(y)) > \kappa^{-2} (1-2t)|p| - 2\lambda.  
 \end{equation}
\end{claim}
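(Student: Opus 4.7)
The approach is a case analysis on the parameters of $x$ and $y$ relative to those of $a$ and $b$ on $\gamma$. Write $\alpha < \beta$ for the parameters with $\gamma(\alpha)=a$ and $\gamma(\beta)=b$. Unpacking the definition, $\gamma(x)\in\gamma_{\mathbf{l}}$ means either (i) $x\le\alpha$, or (ii) $x\in(\alpha,\beta)$ with $d(\gamma(x),a)<t|p|$; symmetrically $\gamma(y)\in\gamma_{\mathbf{r}}$ means either (i$'$) $y\ge\beta$, or (ii$'$) $y\in(\alpha,\beta)$ with $d(\gamma(y),b)<t|p|$. Before splitting, observe that $\gamma_{\mathbf{l}}$ and $\gamma_{\mathbf{r}}$ are automatically disjoint because $t<1/2$ forces $d(z,a)+d(z,b)<2t|p|<|p|$ for any hypothetical common point $z\in\gamma_{[a,b]}$, contradicting the triangle inequality.

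The idea in the four resulting cases is uniform: use the triangle inequality whenever both points lie in $\gamma_{[a,b]}$ close to $a$ and $b$, and otherwise convert a distance bound into a parameter gap via the upper half of the quasi-isometric embedding inequality, then translate the parameter gap back into a distance via the lower half, absorbing a factor of $\kappa^{-2}$. Case (ii)(ii$'$) is the easiest: the triangle inequality gives
\[
d(\gamma(x),\gamma(y))\ge d(a,b)-d(\gamma(x),a)-d(\gamma(y),b)>(1-2t)|p|,
\]
which is strictly stronger than the claimed bound. In case (i)(i$'$), the quasi-isometry bound $\beta-\alpha\ge(|p|-\lambda)/\kappa$ yields $y-x\ge(|p|-\lambda)/\kappa$, hence $d(\gamma(x),\gamma(y))\ge(|p|-\lambda)/\kappa^{2}-\lambda$. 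In the mixed case (i)(ii$'$), the triangle inequality gives $d(\gamma(y),a)>(1-t)|p|$, and the upper quasi-isometry bound $d(\gamma(y),a)\le\kappa(y-\alpha)+\lambda$ then forces $y-\alpha>((1-t)|p|-\lambda)/\kappa$; since $x\le\alpha$, this produces $d(\gamma(x),\gamma(y))>((1-t)|p|-\lambda)/\kappa^{2}-\lambda$. Case (ii)(i$'$) is symmetric.

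The main obstacle is really just bookkeeping: verifying that each of the four lower bounds dominates $\kappa^{-2}(1-2t)|p|-2\lambda$. Each check reduces to a short inequality in $\lambda,\kappa,t$, and $|p|$ that holds because $\kappa\ge 1$ and all the relevant quantities are nonnegative (for instance, in case (i)(i$'$) the comparison simplifies to $\lambda(1-\kappa^{-2})+2t|p|\kappa^{-2}\ge 0$). The standing hypothesis $|p|>4\lambda\kappa^{2}/(1-2t)$ is not strictly needed for this claim---when it fails the target bound is already nonpositive and the claim is vacuous---but it is essential elsewhere in the proof of \Cref{th:S_MT}, for example to ensure that the middle $\gamma_{\mathbf{t}[a,b]}$ is nonempty.
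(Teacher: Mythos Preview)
Your proof is correct and follows essentially the same case analysis as the paper's proof: both split according to whether $\gamma(x)$ lies to the left of $a$ or in $\gamma_{[a,b]}$ (and likewise for $\gamma(y)$), use the triangle inequality for the ``inner'' case and the two-sided quasi-isometry inequalities for the cases involving parameters outside $[\alpha,\beta]$, and then observe that each resulting bound dominates $\kappa^{-2}(1-2t)|p|-2\lambda$. Your bookkeeping is slightly more explicit (e.g.\ you record $(1-t)|p|$ in the mixed case where the paper is content with the weaker $(1-2t)|p|$), but there is no substantive difference in approach.
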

 
\begin{proof}[Proof of \cref{claim1}]
Assume first that $\gamma(x), \gamma(y) \in \gamma_{[a,b]}$. Then $d(\gamma(x),a), d(\gamma(y),b) < t \cdot|p|$, and the triangle inequality gives
$$d(a,b) < td(a,b) + d(\gamma(x), \gamma(y)) + td(a,b)$$
and a rearrangement gives that $d(\gamma(x),\gamma(y)) \geq (1-2t)d(a,b)$.

We next assume that $\gamma(x)$ is to the left of $a$ and $\gamma(y)$ is to the right of $b$; let $s,s' \in I$ so that $\gamma(s)=a$ and $\gamma(s') = b$.  Then $x < s< s'<y$ and since $\gamma$ is a $(\kappa, \lambda)$-quasigeodesic, we get
$$\kappa^{-1} (d(a,b) - \lambda) \leq s'-s\leq y-x \leq \kappa(d(\gamma(x),\gamma(y))+\lambda)$$
and a rearrangement and simplification gives $d(\gamma(x),\gamma(y)) \geq \kappa^{-2}d(a,b) - 2\lambda$.  

Finally, a similar argument proves that in the mixed case when (without loss of generality) $\gamma(x)$ is to the left of $a$ and $\gamma(y) \in \gamma_{[a,b]}$, 

\[ d(\gamma(x), \gamma(y)) \ge \kappa^{-2} d(a, \gamma(y))- 2\lambda. \]
Hence, the desired bound follows from the fact that $d(a, \gamma(y)) \ge (1-2t)\cdot |p|$. This proves the claim. 
\end{proof}

Now set 
\[K = \frac{1}{8} \left[\frac{(1-2t)L}{\kappa^2C}- 2\lambda \right],\]
 where we assume that $L$ is sufficiently large so that $K>0$ and that $\rho(K)/K \le 1$. (Recall that $\lim_{r \to \infty} \rho(r)/r = 0$.)
 Then since $L \le \|p\| \le C|p|$, we have that 
 \[K \le \frac{1}{8} \left[\kappa^{-2}(1-2t)|p|- 2\lambda \right]. \]
 Since $p$ avoids the $L$--neighborhood of the $t$-middle of $\gamma$, it also avoids the $K$--neighborhood of the $t$-middle of $\gamma$.

Let $\mathcal{H}$ be the collection of maximal subarcs of $p$ which lie outside of the open $K$--neighborhood of $\gamma$. Order $\mathcal{H}$ using the orientation of $p$. 

\begin{claim}\label{claim2}
There is an $h \in \mathcal{H}$ which has one endpoint that is $K$--close to $\gamma_{\mathbf{l}}$ and one endpoint $K$--close to $\gamma_{\mathbf{r}}$.
\end{claim}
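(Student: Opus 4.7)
The plan is to show that every endpoint of every $h \in \mathcal{H}$ lies within $K$ of exactly one of $\gamma_{\mathbf{l}}, \gamma_{\mathbf{r}}$, and then run a discrete intermediate-value argument along $p$ to extract an $h$ whose two endpoints land on opposite sides.

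First I would record two arithmetic consequences of the choice of $K$. Since $\kappa, C \geq 1$ and $1-2t < 1$, one has $8K \leq L$. Since $L \leq \|p\| \leq C|p|$, one also has $8K \leq \kappa^{-2}(1-2t)|p| - 2\lambda$. Combining $K \leq L$ with the hypothesis that $p$ is disjoint from $N_L(\gamma_{\mathbf{t}[a,b]})$, any $x \in p$ with $d(x,\gamma) \leq K$ must realize its distance to $\gamma$ on a point of $\gamma_{\mathbf{l}} \cup \gamma_{\mathbf{r}}$ by the decomposition \eqref{union}, since the alternative would force $x$ within $L$ of $\gamma_{\mathbf{t}[a,b]}$. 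Combining $8K \leq \kappa^{-2}(1-2t)|p| - 2\lambda$ with \Cref{claim1} gives $d(\gamma_{\mathbf{l}}, \gamma_{\mathbf{r}}) > 8K$, so by the triangle inequality no such $x$ can lie within $K$ of both $\gamma_{\mathbf{l}}$ and $\gamma_{\mathbf{r}}$. This lets me define a sign $\sigma(x) \in \{+, -\}$ on the closed set $Q = \{x \in p : d(x,\gamma) \leq K\}$ by $\sigma(x) = +$ iff $d(x, \gamma_{\mathbf{l}}) \leq K$. A standard connectedness argument---applied to the closed, disjoint subsets $\{d(\cdot, \gamma_{\mathbf{l}}) \leq K\} \cap Q$ and $\{d(\cdot, \gamma_{\mathbf{r}}) \leq K\} \cap Q$ that together cover $Q$---shows $\sigma$ is constant on each connected component of $Q$.

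To conclude, decompose $p$ as an alternating concatenation $I_0, h_1, I_1, h_2, \ldots, h_n, I_n$, where the $h_j$ are the elements of $\mathcal{H}$ in order and each $I_j$ is the closure of the maximal subarc of $p$ lying between consecutive $h$'s, with $a \in I_0$ and $b \in I_n$. Each $I_j$ lies in $Q$, so $\sigma$ is defined and takes a constant value $\sigma_j \in \{+, -\}$ on $I_j$. Since $a \in \gamma_{\mathbf{l}}$ yields $\sigma_0 = +$ and $b \in \gamma_{\mathbf{r}}$ yields $\sigma_n = -$, there must exist an index $j$ with $\sigma_{j-1} \neq \sigma_j$; the two endpoints of $h_j$, shared with $I_{j-1}$ and $I_j$ respectively, are then $K$-close to $\gamma_{\mathbf{l}}$ and $\gamma_{\mathbf{r}}$ in the appropriate order, and this $h_j$ is the desired arc. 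I expect the only real subtlety to be the bookkeeping of the two inequalities governing $K$, which is exactly where the hypotheses $L \leq \|p\| \leq C|p|$ and the explicit choice of $K$ interact; once those are in place the remainder is a clean intermediate-value argument along $p$.
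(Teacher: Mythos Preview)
Your argument is correct and is essentially the paper's own proof, reorganized. The paper also observes that every point of $p$ in the closed $K$--neighborhood of $\gamma$ lies within $K$ of $\gamma_{\mathbf{l}}$ or $\gamma_{\mathbf{r}}$, and then uses the bound from \Cref{claim1} together with a continuity argument on a connected subarc of $p$ lying in that neighborhood; the only difference is that the paper runs this as a proof by contradiction (assuming every $h$ has both endpoints on the same side and producing a point $K$--close to both $\gamma_{\mathbf{l}}$ and $\gamma_{\mathbf{r}}$), whereas you invoke \Cref{claim1} up front to make the sign function $\sigma$ well-defined and then argue directly. The ingredients and the logic are the same.
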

 
\begin{proof}[Proof of \cref{claim2}]
First note that any point of $p$ contained in the closed $K$--neighborhood of $\gamma$, including endpoints of paths in $\mathcal{H}$, has distance at most $K$ from either $\gamma_{\mathbf{l}}$ or $\gamma_{\mathbf{r}}$. This follows from (\ref{union}) and the fact that the entire path $p$ avoids the $K$-neighborhood of the $t$--middle of $\gamma$. 

If each path in $\mathcal{H}$ has both endpoints close to only one of $\gamma_{\mathbf{l}}$ or $\gamma_{\mathbf{r}}$, then there must exist consecutive subpaths $h_i$ and $h_{i+1}$ in $\mathcal{H}$ such that the terminal endpoint of $h_i$ is close to a different end of $\gamma$ than the initial endpoint of $h_{i+1}$. If $q$ is the closed subpath of $p$ between $h_i$ and $h_{i+1}$, then each point of $q$ lies within distance $K$ from either $\gamma_{\mathbf{l}}$ or $\gamma_{\mathbf{r}}$, and its endpoints are $K$-close to different ends. Hence, by continuity, there is a point $z \in q$ which is $K$--close to points in both $\gamma_{\mathbf{l}}$ and $\gamma_{\mathbf{r}}$. This gives a point in $\gamma_{\mathbf{l}}$ and a point in $\gamma_{\mathbf{r}}$ which have distance at most $2K \le \frac{1}{4}[\kappa^{-2} (1-2t)|p|- 2\lambda]$ from one another. This, however, is impossible as it contradicts (\ref{fatmiddle}). 
\end{proof}

Hence, there is $h \in \mathcal{H}$ with endpoints $x,y$ such that $d(x,\gamma_{\mathbf{l}}) \le K$ and $d(y, \gamma_{\mathbf{r}})\le K$. Since by (\ref{fatmiddle}) any point in $\gamma_{\mathbf{l}}$ has distance at least $\kappa^{-2} (1-2t)|p| - 2\lambda$ from any point in $\gamma_{\mathbf{r}}$, we have
\begin{align*}
|h| &\ge \kappa^{-2} (1-2t)|p| - 2\lambda - 2K  \\
 &\ge \kappa^{-2} (1-2t)|p| - 2\lambda - \frac{1}{4}(\kappa^{-2}(1-2t)|p|- 2\lambda) \\
  &= \frac{3}{4}(\kappa^{-2}(1-2t)|p| -2\lambda), 
\end{align*}
and in particular $|h| \ge 6K$. Further using the assumption that that $|p|> 4\lambda \kappa^2/(1-2t)$, we have the estimate
\[
|h| \ge \frac{3}{8}(\kappa^{-2}(1-2t)|p|).
\]

Therefore, since $||p|| \le C|p|$
\[
\sl(h) = \frac{\|h\|}{|h|} \le \frac{\|p\|}{\frac{3}{8}(\kappa^{-2}(1-2t)|p|)} \le  \frac{8C|p|}{3\kappa^{-2}(1-2t)|p|} = \frac{8C}{3\kappa^{-2}(1-2t)}.
\]

Finally, applying \Cref{lem:contract} to $h$ yields
\begin{align*}
\frac{\rho(K)}{K} &\ge \frac{1 - (2K+ \rho(K))/|h|}{\sl(h)} \\
&\ge  \frac{1- 3K/|h|}{\sl(h)}\\
&\ge  \frac{1/2}{\sl(h)}\\
&\ge  \frac{3\kappa^{-2}(1-2t)}{16C} >0
\end{align*}
where the second and third inequalities hold since $\rho(K)/K \le 1$ and $|h| \ge 6 K$, respectively.
As $\rho$ is sublinear, $\lim_{r \to \infty} \rho(r)/r = 0$. Therefore, the quantity $K$, and hence $L$, is uniformly bounded above in terms of the constants $C,t, \kappa, \lambda$, and the function $\rho$.  This completes the proof.
\end{proof}

We conclude this section by putting together \Cref{th:MiddleRecurrence_stable} and \Cref{th:S_MT} to prove \Cref{th:intro_middle_recurrence}.

\begin{proof}[Proof of \Cref{th:intro_middle_recurrence}]
Let $q$ be a middle recurrent quasigeodesic in $X$. As in the proof of \Cref{th:MiddleRecurrence_stable}, we first note that there is a tame (continuous) quasigeodesic $q'$ which lies at uniformly bounded Hausdorff distance from $q$. Hence, $q'$ is also middle recurrent, with recurrence function depending only on that of $q$. By \Cref{th:MiddleRecurrence_stable}, applied to the path $q'$, we see that $q'$, and hence $q$, is a stable quasigeodesic whose stability function depends only on the recurrence function of $q$.
The converse is exactly the statement of \Cref{th:S_MT}.
\end{proof}

\subsection{Middle recurrence in the literature} \label{sec:Mid_Lit}
In this subsection, we identify and address an error in the literature.

The notion of middle recurrence for a quasigeodesic $\gamma$ in $X$---the central concept of this project---was first introduced by Drutu--Mozes--Sapir \cite{DMS}.  Proposition 3.24 of \cite{DMS} gives a list of $5$ conditions which are claimed to be equivalent to stability of a biinfinite quasigeodesic. 
One of these conditions is middle recurrence. We note that since their proof runs through an argument involving asymptotic cones, it does not give an explicit relationship between the stability and recurrence functions as is achieved by \Cref{th:intro_middle_recurrence} and which is necessary for \Cref{th:pull_back}. 

We now show that one of these conditions --which we refer to as \define{Property 5}  as it is the fifth listed condition -- is in fact strictly stronger than stability: 

\begin{itemize}
\item []
\define{Property 5}: For every $C\ge 1$, there exists $K \geq 0$ so that if $p$ is a path with endpoints $a,b$ on $\gamma$ satisfying $||p|| \leq C \cdot d(a,b)$, the portion $\gamma|_{[a,b]}$ of $\gamma$ between $a$ and $b$ lies within $K$ of $p$. 
\end{itemize}

The following example demonstrates that geodesics in the hyperbolic plane do not satisfy Property $5$; however, they are all stable, as is any geodesic in a Gromov hyperbolic space. 

\begin{example} Let  $\gamma$  be the horizontal diameter in the disk model of $\mathbb{H}^{2}$. Let $S_D$ be the sphere of radius $D$ about the origin (which $\gamma$ runs through; identify $\gamma$ with $\mathbb{R}$ in the obvious way, identifying the center of the disk with the origin). Note that the upper hemisphere of $S_D$ has length on the order of $e^D$. Let $p$ be the following path: let $a$ be the point corresponding to $-e^{D}$ on $\gamma$ and follow $\gamma$ until reaching $-D$; then follow along the upper hemisphere of $S_D$ until arriving at $D$; then proceed along $\gamma$ until reaching $e^D$, which we call $b$. Then $\|p \| \le (\pi+2)e^D \le 3 \cdot d(a,b)$. Note that the origin, which is a point on $\gamma$, does not lie in a $D$--neighborhood of $p$ by construction. Since $D$ can be made arbitrarily large, $\gamma$ does not satisfy Property $5$. 
\end{example}

We note that as $D \rightarrow \infty$, the diameter of $S_{D}$ is vanishingly small compared to the full length of $\gamma|_{[a,b]}$, and therefore this example does not contradict the middle recurrence property. 

The proof in \cite{DMS} that stability implies middle recurrence factors through Property $5$.  However,  \Cref{th:intro_middle_recurrence} gives a direct proof of the equivalence of middle recurrence and stability.

\section{Pulling back stability} \label{sec:pbs}

Using the middle recurrence property for stable quasigeodesics we prove our main theorem (\Cref{intro:th_pulling_back}). We encourage the reader to observe that its proof relies on the uniform equivalence of stability and middle recurrence as established in the previous section.

\begin{theorem}\label{th:pulling_back} \label{th:pull_back}
Let $G$ be a finitely generated group with a proper action $G \curvearrowright X$ on the proper geodesic metric space $X$. Let $H \le G$ be a 
subgroup such that for some $x \in X$, the orbit map $\orb \colon G \to X$ given by $g \mapsto gx$ restricts to a stable embedding on $H$. Then $H$ is stable in $G$.
\end{theorem}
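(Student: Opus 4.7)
The plan is to show that every geodesic in $H$, viewed as a path in the Cayley graph of $G$, is uniformly middle recurrent, and then invoke \Cref{th:MiddleRecurrence_stable} to conclude uniform stability of all such geodesics, which gives stability of $H$ in $G$. The effectiveness of the equivalence between middle recurrence and stability is what makes this strategy viable, since we will derive our recurrence function in $G$ by transferring one from $X$ and we need the resulting function to be independent of the particular $H$-geodesic.

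To set up, fix a finite generating set $S_G$ of $G$ containing a finite generating set $S_H$ of $H$, so that $H \hookrightarrow G$ is $1$-Lipschitz on Cayley graphs. The orbit map $\orb \colon G \to X$ is then Lipschitz with constant $L = \max_{s \in S_G} d_X(x, sx)$. Since $\orb|_H$ is a quasi-isometric embedding and $\orb$ is Lipschitz on $G$, one checks that $H$ is undistorted in $G$. By hypothesis and \Cref{th:S_MT}, for any $t \in (0,1/2)$, the orbit of any $H$-geodesic is a uniformly $t$-middle recurrent quasigeodesic in $X$, with recurrence function $\mathfrak{m}$ depending only on the stable embedding data.

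The main work is the following transfer argument. Let $\gamma$ be a geodesic in $H$ and let $p$ be a path in the Cayley graph of $G$ with endpoints $a, b \in \gamma$ and $\|p\| \le C \cdot d_G(a, b)$. Composing with $\orb$ produces a path $\orb(p)$ in $X$ of arclength at most $L \cdot \|p\|$ whose endpoints lie on the quasigeodesic $\orb(\gamma)$. Using the quasi-isometric embedding of $H$ into $X$ together with $d_G \le d_H$, the distance $d_X(\orb(a), \orb(b))$ is bounded below by a linear function of $d_G(a,b)$. Hence the slope of $\orb(p)$ in $X$ is uniformly controlled by $C$ and the embedding data whenever $d_G(a,b)$ is sufficiently large; the bounded range can be absorbed into the final recurrence function. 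Middle recurrence of $\orb(\gamma)$ then yields a point $q$ on $p$ such that $\orb(q)$ lies within some uniform distance $M$ of a point $\orb(h)$, $h \in \gamma$, in the $t$-middle of $\orb(\gamma)$.

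Finally, we pull this conclusion back to $G$. Properness of the action of $G$ on the proper space $X$ implies that $F_M := \{g \in G : d_X(x, gx) \le M\}$ is finite, hence has bounded $G$-diameter $N = N(M)$; writing $d_X(\orb(q), \orb(h)) = d_X(x, q^{-1}h\, x)$ gives $d_G(q, h) \le N$. It remains to verify that $h$ lies in a sufficiently generous $G$-middle of $\gamma$: combining the $X$-middle condition for $\orb(h)$ with the quasi-isometric embeddings $H \hookrightarrow X$ and $H \hookrightarrow G$, a direct computation produces constants $t' \in (0,1/2)$ and $D_0 \ge 0$, depending only on the embedding data, such that $d_G(h,a), d_G(h,b) \ge t' \cdot d_G(a,b)$ whenever $d_G(a,b) \ge D_0$. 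Cases with $d_G(a,b) < D_0$ are absorbed into an enlarged recurrence function. This establishes uniform $t'$-middle recurrence for every $H$-geodesic in $G$, and \Cref{th:MiddleRecurrence_stable} then yields uniform stability, completing the proof.

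The main obstacle is the quantitative bookkeeping in the last step: one must ensure that the additive and multiplicative distortions arising from the quasi-isometric embeddings only shift the parameter $t$ to some smaller but still positive $t'$, and that all constants obtained depend only on the embedding data, not on $\gamma$. This is exactly where the effective nature of \Cref{th:intro_middle_recurrence} is essential, since a non-effective equivalence between stability and middle recurrence would not allow us to uniformize the stability constant over the (infinite) family of $H$-geodesics.
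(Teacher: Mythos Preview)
Your proposal is correct and follows essentially the same argument as the paper: both establish uniform middle recurrence of $H$-geodesics in $G$ by pushing a path $p$ forward under the Lipschitz orbit map, invoking \Cref{th:S_MT} for the stable image $\overline{\gamma}$ in $X$, and then pulling the resulting near-middle point back via properness of the action and the quasi-isometric embedding $H\to X$. The paper simply fixes $t=1/3$ in $X$ and is slightly terser about absorbing the additive distortion for small $d_G(a,b)$, but the structure and all the key ingredients are identical to what you outline.
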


\begin{proof} 
Let $S$ be a finite generating set for $G$ chosen so that $S\cap H$ generates $H$.
Denote the corresponding Cayley graph by $\Gamma(G,S)$, and extend the orbit map to
$\orb \colon \Gamma(G,S) \to X$ in the usual way. Note that $\Gamma(H, S\cap H)$ is a subgraph of $\Gamma(G,S)$.
Since $\orb \colon \Gamma(G,S) \to X$ is coarsely Lipschitz and its restriction to $H$ is a quasi-isometric embedding, it follows easily that $H \le G$ is undistorted. It suffices to show that for any geodesic $\gamma$ in $\Gamma(H,S\cap H)$, $\gamma$ is $f$--stable as a quasigeodesic in $\Gamma(G,S)$, where the stability function $f$ is independent of $\gamma$. By \Cref{th:MiddleRecurrence_stable}, this is equivalent to the statement that there is a constant $0<t<1/2$ such that for each $C\ge 0$ there is an $M \ge0$ satisfying the following: if $p$ is any path in $G$ with $\|p \| \le C |p|$ sharing endpoints with $\gamma$, then
\[
p \cap N_{M}(\gamma_{\mathbf{t}}) \neq \emptyset.
\]

To this end, let $C\ge0$ and let $p$ be such a path. We use $\widebar{\cdot}$ notation for the images of objects under the orbit map $\orb$.
Since $\orb \colon \Gamma(H,H\cap S) \to X$ is a stable embedding, the image $\widebar{\gamma}$ is an $f_X$--stable quasigeodesic in $X$, for a stability function $f_X$ not depending on $\gamma$. Moreover, $\widebar{p}$ is a path which shares endpoints with $\widebar{\gamma}$ such that $\|\widebar p\| \le C_X |\widebar{p}|$, where $C_X \ge 0$ depends only on $C$ and $S,X$.
Hence, by \Cref{th:S_MT}, there is an $M_X$, again depending only on $C$ and  $S,X$, such that 
\[
\widebar{p} \cap N_{M_X}(\widebar{\gamma}_{\mathbf{1/3}}) \neq \emptyset.
\]

Since $G$ acts properly on $X$, there exists $M>0$ depending only on $M_X$ so that if $d_{X}(g_{1} x, g_{2} x) \leq M_X$, then $d_{G}(g_{1}, g_{2}) \leq M$. Hence, there are $x \in p$ and $z \in \gamma$ such that $\widebar{z} \in \widebar{\gamma}_{\mathbf{1/3}}$ and $d_G(x,z) \le M$,  so it only remains to show that $z\in \gamma_{\mathbf{t}}$ for $0 < t< 1/2$ not depending on $\gamma$. However, this follows easily from the fact that $\orb \colon H \to X$ is a quasi-isometric embedding.
\end{proof}

\section{Applications} \label{sec:app}


\subsection{Stability in the mapping class group} \label{subsec:mod}

We begin with a shorter, simpler proof of one direction of the main theorem from \cite{DT15}:

\begin{theorem}[\cite{DT15}] \label{thm:stab_map}
Let $H$ be a convex cocompact subgroup of the mapping class group. Then $H \le \MCG(S)$ is stable.
\end{theorem}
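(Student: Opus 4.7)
The plan is to apply the pulling back theorem (\Cref{th:pull_back}) to the proper isometric action of $\MCG(S)$ on Teichm\"uller space $\mathcal{T}(S)$ equipped with the Teichm\"uller metric. Since $\mathcal{T}(S)$ is a proper geodesic metric space and the $\MCG(S)$--action on it is properly discontinuous, the hypotheses of \Cref{th:pull_back} on the ambient space are immediate. The content of the proof is therefore to verify that, for any $x \in \mathcal{T}(S)$, the orbit map $\orb\colon H \to \mathcal{T}(S)$ is a \emph{stable} embedding, not merely a quasi-isometric embedding.

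For this, I would first unpack the consequences of convex cocompactness that live on the Teichm\"uller side. By Farb--Mosher's original definition, the orbit $H\cdot x$ is quasi-convex in $\mathcal{T}(S)$, and $H$ acts cocompactly on its quasi-convex hull $\mathcal{H}$. Cocompactness forces $\mathcal{H}$ to lie in the $\epsilon$--thick part $\mathcal{T}_{\ge \epsilon}(S)$ for some uniform $\epsilon > 0$. Consequently, any Teichm\"uller geodesic between two points of $H\cdot x$ projects into a single compact set in moduli space, independent of the chosen orbit points.

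The key geometric ingredient is Minsky's contraction theorem for cobounded Teichm\"uller geodesics: geodesics whose projection to moduli space is confined to a fixed compact set are strongly contracting in $\mathcal{T}(S)$, with contracting constants depending only on that compact set. Applied to $H\cdot x$, this yields a single sublinear (in fact bounded) contracting function $\rho$ for the entire family of Teichm\"uller geodesics between $H$-orbit points. Strong contraction implies stability with uniform constants by standard arguments (and is compatible with the \Cref{prop:stab_contract} framework used throughout \Cref{sec:background}), so the orbit map $\orb \colon H \to \mathcal{T}(S)$ is a stable embedding. Now \Cref{th:pull_back} applies and gives that $H$ is stable in $\MCG(S)$.

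The main obstacle to be careful about is the \emph{uniformity} of the stability constants across all geodesics in the $H$-orbit; since \Cref{th:pull_back} needs a genuine stable embedding, per-geodesic stability with drifting constants would not suffice. This uniformity is exactly what $H$--equivariance of the thick part (and hence of the contracting data from Minsky's theorem) buys us. Everything else in the argument is packaging: the \Cref{intro:th_pulling_back} machinery converts uniform stability of the Teichm\"uller orbit into stability of $H$ inside $\MCG(S)$ with no additional work.
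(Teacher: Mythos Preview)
Your proposal is correct and follows essentially the same route as the paper: show that convex cocompactness forces Teichm\"uller geodesics between $H$--orbit points to be uniformly thick, invoke Minsky's contraction theorem to conclude that the orbit map into $\mathcal{T}(S)$ is a stable embedding, and then apply \Cref{th:pull_back}. The paper's proof is terser (it cites \cite{minsky1996quasi} and \cite{KentLein} directly rather than spelling out the cocompactness-implies-thick step), but the logical skeleton is identical.
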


\begin{proof}
Since $H$ is convex cocompact, for any $X \in \T(S)$, the orbit $H \cdot X$ is quasiconvex for the Teichm\"uller metric. This implies that all Teichm\"uller geodesics joining orbit points in $H \cdot X$ are uniformly thick, and so the orbit map $H \to \T(S)$ is stable by \cite{minsky1996quasi}. See also \cite[Theorem 6.3]{KentLein}.

Since $\MCG(S) \curvearrowright \T(S)$ satisfies the hypothesis of \Cref{th:pulling_back}, we conclude that $H$ is stable in $\MCG(S)$ as required. 
\end{proof}

\subsection{Stability in $\mbox{Out}(F_{n})$}\label{subsec:out}
The free factor complex $\F_n$ is the simplicial complex whose vertices are proper conjugacy classes of free factors of $F_n$ and whose $k$--simplices correspond to chains $A_0 \le A_1 \le \ldots \le A_k$. The action of $\Out(F_n)$ on conjugacy classes of free factors extends to a simplicial action of $\F_n$, which like the action of the mapping class group on the curve complex, is highly nonproper. However, Bestvina and Feighn proved that $\F_n$ is hyperbolic \cite{BF14}. 

The main result of this section is the following:

\begin{theorem} \label{th: Out}
Let $H$ be a finitely generated subgroup of $\Out(F_n)$ which has a quasi-isometric orbit map into the free factor complex $\F_n$. Then $H$ is stable in $\Out(F_n)$.
\end{theorem}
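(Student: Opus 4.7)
The plan is to invoke \Cref{th:pull_back} with $X$ equal to Culler--Vogtmann Outer space $CV_n$, equipped with a symmetrization of the Lipschitz metric so that it becomes a proper geodesic metric space carrying the usual proper $\Out(F_n)$--action. Fix a basepoint $x_0$ in the thick part of $CV_n$ and let $\orb(g) = g \cdot x_0$. To apply \Cref{th:pull_back} it suffices to show that $\orb\colon H \to CV_n$ is a stable embedding.

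First I would verify the underlying quasi-isometric embedding: composing $\orb$ with the coarsely Lipschitz $\Out(F_n)$--equivariant projection $CV_n \to \F_n$ (Bestvina--Feighn) yields the QI embedding $H \hookrightarrow \F_n$ given by hypothesis, so both inequalities for $\orb|_H$ follow (one direction is automatic; the other uses that loss of QI embedding between $H$ and $\F_n$ would force loss in $CV_n$). I would then show that the orbit $H \cdot x_0$ is uniformly thick. This is the standard dictionary translating ``projection to $\F_n$ is a QI embedding'' into a quantitative lower bound on the shortest loop in $hx_0$: if some $h_n x_0$ became arbitrarily thin, the corresponding short free factor would force the $\F_n$--diameter of $\{x_0, h_n x_0\}$ to stay bounded while $|h_n|_H \to \infty$, contradicting the QI embedding.

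The core step is to produce, for each pair of orbit points $p,q \in H\cdot x_0$, a directed (folding) path $\gamma_{p,q}$ from $p$ to $q$ in $CV_n$ which is a contracting quasigeodesic with uniform constants. This is where I would invoke the Dowdall--Taylor results on contracting folding paths \cite{dowdall2014hyperbolic, dowdall2015contracting} together with the appendix lemma on contracting directed geodesics in the thick part: since both endpoints lie in a fixed thick region, and since the projection of $\gamma_{p,q}$ to $\F_n$ is an unparametrized quasigeodesic (by the QI embedding $H \to \F_n$), the path stays uniformly thick and the appendix lemma produces a sublinear contraction function $\rho$ independent of $p,q$. By \Cref{prop:stab_contract}'s converse direction (which is also in \cite{ACGH} and, effectively, a consequence of our \Cref{th:intro_middle_recurrence}), a uniform family of contracting quasigeodesics between all pairs of orbit points forces the orbit $H\cdot x_0$ itself to be stable in $CV_n$, i.e.\ $\orb|_H$ is a stable embedding. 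Applying \Cref{th:pull_back} then delivers stability of $H$ in $\Out(F_n)$.

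The main obstacle is the middle step: folding paths are intrinsically \emph{directed} quasigeodesics, whereas stability is defined with respect to a symmetric metric and all quasigeodesics between orbit points. Bridging this asymmetry requires (a) choosing a symmetric geodesic metric on $CV_n$ in which $\Out(F_n)$ still acts properly, and (b) upgrading contraction of the specific folding paths to Morse/stability for \emph{arbitrary} quasigeodesics sharing endpoints in the orbit. Step (a) is handled by a standard thickening; step (b) is precisely what the contracting-implies-stable direction of \cite{ACGH}, combined with the thick-part control from the appendix, accomplishes once the folding paths are known to project to quasigeodesics in $\F_n$.
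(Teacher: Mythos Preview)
Your overall strategy mirrors the paper's, but there is a genuine gap at exactly the point you flag as ``the main obstacle.'' You propose to apply \Cref{th:pull_back} directly to $CV_n$ ``equipped with a symmetrization of the Lipschitz metric so that it becomes a proper geodesic metric space.'' But as the paper records (citing \cite{FMout}), the symmetrized metric $\symd$ on Outer space is \emph{not} geodesic; this is precisely why the paper writes ``we must appeal to the arguments in \Cref{th:pull_back} rather than the theorem statement itself.'' Your assertion that ``Step (a) is handled by a standard thickening'' does not resolve this: there is no off-the-shelf proper geodesic metric on $CV_n$ with a proper $\Out(F_n)$--action in which the ACGH contracting-implies-stable machinery and the strong-contraction results of \cite{dowdall2015contracting} are known to hold as stated. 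Those results, and the notion of strong contraction you need, are formulated for the \emph{asymmetric} Lipschitz metric and for \emph{directed} geodesics; upgrading them to a symmetric geodesic setting is not routine.

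The paper sidesteps this by not attempting to show that $H\cdot x_0$ is stable in Outer space at all. Instead it verifies middle recurrence for $H$--geodesics \emph{directly in} $\Out(F_n)$: push a path $p\subset\Out(F_n)$ into $\X_n$ via the orbit map, apply the asymmetric middle-recurrence lemma (\Cref{lem:Out_middle}) to the directed Lipschitz geodesic between the orbit points (using \cite{dowdall2015contracting} for strong contraction and thickness), use fellow-traveling of $\gamma$ with the orbit image $\widebar\alpha$ from \cite{dowdall2014hyperbolic}, and then pull the conclusion back via properness of the orbit map for $\symd$. This avoids ever needing a symmetric geodesic model of Outer space or a symmetric version of contracting-implies-stable. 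Your step (b) is therefore not just a packaging issue; it is the substantive difficulty, and the paper's route around it is different from what you propose.
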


Roughly the same argument as in \Cref{thm:stab_map} can be used along with \Cref{th:pulling_back}  to show that convex cocompact subgroups of $\Out(F_n)$ are stable. The only difficulty in applying \Cref{th:pulling_back} in this setting is the fact that the Lipschitz metric on Outer space is asymmetric. Hence, we must appeal to the arguments in \Cref{th:pulling_back} rather than the theorem statement itself. 

For the remainder of this subsection, we assume that the reader has some familiarity with the Lipschitz metric on Outer space. See for example \cite{FMout, BF14}.

Let $\X_n$ denote Outer space with the Lipschitz metric $d$. Informally, $d(G_1,G_2)$ is the logarithm of the minimal Lipschitz constant among all maps from $G_1$ to $G_2$ in the correct homotopy class. We recall that in general $d(G_1,G_2) \neq d(G_2,G_1)$ and we set $\symd(G_1,G_2) = d(G_1,G_2) + d(G_2,G_1)$. Although the \emph{symmetrized metric} $\symd$ is an honest metric on $\X_n$ which induces the usual topology, $(\X_n,\symd)$ is not a geodesic metric space \cite{FMout}. We remark, however, that the metric space $(\X_n,\symd)$ is proper and that the natural action $\Out(F_n) \curvearrowright \X_n$ is properly discontinuous. Finally, we denote by $\X^{\ge \epsilon}_n$ the $\epsilon$--thick part of $\X_n$; this is the subspace of $\X_n$ consisting of graphs whose shortest essential loop has length at least $\epsilon$.

Recall that a directed geodesic $\gamma \colon I \to \X$ is $D$--strongly contracting if for any $H,H' \in \X$ with $d(H,H') \le d(H,\gamma)$, the diameter of the projection of the geodesic from  $H$ to $H'$ to $\gamma$ is bounded by $D$. See \cite{algom2011strongly, dowdall2015contracting}.

We need the following lemma which gives the required middle recurrence statement for strongly contracting geodesics in $\X_n$, analogous to \Cref{lem:contract}. The proof is provided in the Appendix (\Cref{sec:appendix}).

\begin{lemma}\label{lem:Out_middle}
For each $D, C \ge0$ there is an $R\ge0$ such that the following holds:
Let $\gamma \colon [0,T] \to \X^{\ge \epsilon}_n$ be a directed geodesic 
which is $D$--strongly contracting. Suppose that $p$ is a path in $\X^{\ge \epsilon}_n$ with endpoints $a = \gamma(0)$ and $b = \gamma(T)$  such that $\|p\| \le C \cdot d(a,b)$. Then there are $x\in p$ and $t \in \mathbb{R}$ with $T/3 \le t \le 2T/3$ such that $\symd(x,\gamma(t)) \le R$.
\end{lemma}

We can now show that convex cocompact subgroups of $\Out(F_n)$ are stable.

\begin{proof}[Proof of \Cref{th: Out}]
Fix an orbit map $\Out(F_n) \to \X_n$. We note that for some $Q\ge0$, this map is $Q$--Lipschitz  and proper with respect to the metric $\symd$. By our assumption, the Lipschitz map $\X_n \to \F_n$ induces an orbit map $\Out(F_n) \to \F_n$ such that the restriction $H \to \F_n$ is a quasi-isometric embedding. At the cost of increasing $Q$, if necessary, suppose that the orbit map of $H$ into both $\X_n$ and $\F_n$ is a $Q$--quasi-isometric embedding.

Since the orbit map $\Out(F_n) \to \F_n$ is coarsely Lipschitz, it follows immediately that the subgroup $H \le \Out(F_n)$ is undistorted.  
As in the proof of \Cref{th:pull_back}, it suffices to show the following:
There is a constant $0<m<1/2$ such that for any $C\ge0$ there is an $R\ge0$ with the property that for any $a,b \in H$ and any path $p$ in $\Out(F_n)$ between $a,b$ with $\|p\|\le C\cdot d(a,b)$, the path $p$ meets the $R$--neighborhood of the $m$--middle of $[a,b]_H$. We remark that it suffices to assume that $d(a,b)$ is larger than some constant depending only on $H$.

To this end, let $\alpha$ denote the geodesic in $H$ between $a,b \in H$ and let $p$ be a path from $a$ to $b$ as above. As before, we use bar notation, $\widebar{\cdot}$, to denote the images of these objects under our fixed orbit map $\Out(F_n) \to \X_n$. By \cite[Theorem 1.6]{dowdall2015contracting}, the orbit $\widebar H$ is \emph{strongly contracting}, meaning that there are constants $D,\epsilon \ge0$ such that any directed geodesic in $\X$ joining points in $\widebar H$ is contained in $\X_n^{\ge \epsilon}$ and is $D$--strongly contracting. Hence, if we let $\gamma \colon [0,T] \to \X_n^{\ge\epsilon}$ denote the directed geodesic from $\widebar a$ to $\widebar b$, then \Cref{lem:Out_middle} applies to the path $\widebar p$.
In particular, we have that  $\| \widebar p\| \le 2Q \|p\| \le 2QC d(a,b)$, 
and so by \Cref{lem:Out_middle} there is a constant $K$, depending only on $H \le \Out(F_n)$ and our orbit map, such that for some $ x \in p$, we have $\symd(\widebar x, \gamma(t))\le K$ where $T/3\le t \le 2T/3$. Note that $T = d(\widebar a, \widebar b) \le \symd(\widebar a, \widebar b)$. 

By \cite[Theorem 4.1]{dowdall2014hyperbolic}, the paths $\gamma$ and $\widebar \alpha$ uniformly fellow travel in $\X_n$ with respect to the symmetric metric. Hence, at the cost of increasing $K$, we have that there is $z \in \alpha$ such that $\symd(\gamma(t), \widebar z) \le K$. Combining these facts, $\symd(\widebar x, \widebar z) \le 2K$ and since $\Out(F_n) \to \X_n$ is proper with respect to the symmetric metric, we conclude $d(x,z) \le R$ for some constant $R\ge0$ depending only on $K$ and the orbit map. Since $x\in p$ and $z \in \alpha$, it only remains to show that $z$  lies in the $m$--middle of $\alpha$, for some $m$ depending only on $H$ and the orbit map. For this, we compute

\begin{align*}
d(a,z) &\ge \frac{1}{Q}\symd(\widebar a, \widebar z) - 1
\ge \frac{1}{Q}(d(\widebar a, \gamma(t)) - K) -1\\
&\ge \frac{1}{Q}(1/3 \cdot d(\widebar a, \widebar b) -K) -1 \\
&\ge \frac{1}{Q}(1/3Q \cdot d(a,b) -Q/3 -K) -1 \\
&\ge \frac{1}{6Q^2}d(a,b)
\end{align*}
where the last inequality holds for $d(a,b)$ sufficiently large, depending only on $K$. Since a similar inequality holds to give a lower bound on $d(z,b)$, we conclude that $z$ is in the $m$--middle of $\alpha$ for $m = \frac{1}{6Q^2}$. This completes the proof. 
\end{proof}

\subsection{Relatively hyperbolic groups} \label{subsec:relhyp}
In this section, we characterize stable subgroups of groups which are hyperbolic relative to subgroups of linear divergence. 
We assume the reader has some familiarity with relatively hyperbolic groups \cite{Gromov:essay, farb1998relatively,bowditch2012relatively}, 
and in particular with the definition of relative hyperbolicity due to Groves-Manning \cite{GM08}.
See \cite{Hruska12} for a thorough treatment of the various equivalent definitions. 

Recall that for a pair $(G, \P)$ with $\P = \{P_1, \ldots P_n \}$ and $P_i \le G$,  the group $G$ is hyperbolic relatively to $\P$ if and only if the associated cusped space $\cusp(G,\P)$ is hyperbolic \cite{GM08}. Briefly, fix a generating set $S$ for $G$ which intersects each $P_i$ in a generating set for $P_i$; hereafter, $S$ will be implicit in the discussion. The \define{cusped space} $\cusp(G,\P)$ is the space obtained by attaching combinatorial horoballs along translates of each $P_i$ in the Cayley graph of $G$ -- see \cite{GM08} for a precise definition. We remark that since horoballs are attached equivariantly, there is an induced properly discontinuous action $G \curvearrowright \cusp(G,\P)$, and $\cusp(G,\P)$ itself is a proper metric space. The subgroups $P \in \P$ are called the peripheral subgroups.

Following \cite{farb1998relatively}, we let $\cone(G,\P)$ be the \define{coned-off} Cayley graph. This is the space obtained from the Cayley graph of $G$ by adding edges between all elements of $gP$ for $g \in G$ and $P \in \P$. As before, we have an action $G \curvearrowright \cone(G,\P)$, which, as opposed to the action on $\cusp(G,\P)$, is not proper. We remark that there is an equivariant, $1$--Lipschitz map $\cusp(G,\P) \to \cone(G,\P)$ which sends each vertex of each horoball to the vertex of $gP$ it lies over. \\


Both $\cusp(G,\P)$ and $\cone(G,\P)$ can be used to characterize stable subgroups of $G$, when $(G, \P)$ is relatively hyperbolic and each $P \in \P$ is one-ended and has linear divergence.

\begin{theorem}[Stability in relatively hyperbolic groups] \label{th:relhyp_char_stab}
Let $(G, \P)$ be relatively hyperbolic and suppose that each $P \in \P$ is one-ended and has linear divergence. Then the following are equivalent for a finitely generated subgroup $H \le G$:
\begin{enumerate}
\item $H$ is stable in $G$.
\item $H$ has a quasi-isometric orbit map into $\cusp(G,\P)$.
\item $H$ has a quasi-isometric orbit map into $\cone(G,\P)$.
\end{enumerate}
\end{theorem}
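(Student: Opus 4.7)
My plan is to prove the cyclic implications $(2) \Rightarrow (1) \Rightarrow (3) \Rightarrow (2)$. The implication $(2) \Rightarrow (1)$ is exactly \Cref{intro:th_stab_rel_crit}, a direct application of the pull back theorem \Cref{th:pull_back} to the proper action $G \curvearrowright \cusp(G, \P)$ on the proper hyperbolic cusped space; no divergence hypothesis on the peripherals is needed for this step.

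For $(3) \Rightarrow (2)$, I would combine the equivariant $1$--Lipschitz map $\cusp(G, \P) \to \cone(G, \P)$ (which is the identity on $G$) with the inclusion of $\Gamma(G, S)$ as a subgraph of $\cusp(G, \P)$. On the vertex set $G$ these give
\[
d_{\cone}(g_1, g_2) \; \le \; d_{\cusp}(g_1, g_2) \; \le \; d_G(g_1, g_2).
\]
From the quasi-isometric embedding of $H$ into $\cone(G,\P)$, the word metric $d_H$ is coarsely bounded above by $d_{\cone}$, and hence by $d_G$, on $H$; so $H$ is undistorted in $G$, meaning $d_G$ is coarsely bounded above by $d_H$ on $H$ as well. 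Chaining these inequalities, $d_{\cusp}$ is sandwiched between two quantities coarsely equivalent to $d_H$, yielding the desired quasi-isometric embedding into $\cusp(G, \P)$. This direction also does not use linear divergence.

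The main work is $(1) \Rightarrow (3)$, which is where linear divergence enters. My plan is to argue the contrapositive via the middle recurrence characterization (\Cref{th:intro_middle_recurrence}). Suppose $H$ fails to quasi-isometrically embed into $\cone(G, \P)$. Since $H$ is stable and hence undistorted in $G$, an analysis of $\cone$-geodesics between points of $H$ should produce $H$-geodesic segments $\gamma_n$ with endpoints $a_n, b_n$ satisfying $d_H(a_n, b_n) \to \infty$, together with peripheral cosets $g_n P_n$ and points $a_n', b_n' \in g_n P_n$ close in $G$ to points of $\gamma_n$, such that the bulk of the $G$-displacement between $a_n$ and $b_n$ is realized by a single long peripheral jump along $g_n P_n$. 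Hyperbolicity of the cusped space together with stability of $\gamma_n$ as a quasigeodesic in $G$ should then force $\gamma_n$ to closely fellow-travel with the $G$-geodesic inside $g_n P_n$ between $a_n'$ and $b_n'$.

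The crux, and the main obstacle, is then to use linear divergence of $P_n$ to construct a detour path in $g_n P_n$ from $a_n'$ to $b_n'$ of length $O(d_G(a_n', b_n'))$ that avoids a ball of radius comparable to $d_G(a_n', b_n')$ about the midpoint of the direct peripheral path. Since the peripherals are undistorted in $G$, this detour avoids a ball of comparable radius in the $G$-metric as well. After capping it with short segments back to $\gamma_n$, one obtains a path in $G$ of controlled slope joining two points of $\gamma_n$ whose image avoids a uniform neighborhood of the $t$-middle of $\gamma_n$ for a fixed $t \in (0, 1/2)$; this contradicts $t$-middle recurrence of $\gamma_n$, and hence the stability of $H$. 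The hard part will be securing uniformity: arranging the fellow-traveling, the linear-divergence detour, and the peripheral undistortedness constants all uniformly in $n$, so that a single, genuine violation of middle recurrence can be extracted.
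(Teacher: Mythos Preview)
Your implications $(2)\Rightarrow(1)$ and $(3)\Rightarrow(2)$ match the paper's argument exactly: the first is \Cref{prop:pb_relhyp}, and the second follows from the chain $d_{\cone}\le d_{\cusp}\le d_G$ via the equivariant $1$--Lipschitz map $\cusp(G,\P)\to\cone(G,\P)$.

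For $(1)\Rightarrow(3)$ your plan is plausible but takes a different route from the paper. The paper does not construct explicit detours violating middle recurrence. Instead it invokes \Cref{cor:srqc via proj} (a consequence of Sisto's distance formula) to reduce the statement to showing that the almost-projections $\diam_P(H)$ are uniformly bounded over all peripheral cosets $P\in\overline\P$. Assuming these are unbounded, one finds peripheral geodesics $p_i\subset g_iP$ of length $i$ which, via Sisto's Lemma~1.15, uniformly fellow-travel subsegments of $H$--geodesics; stability of $H$ then forces the $p_i$ to be uniformly $D$--stable in $G$, hence in $P$ (peripherals being undistorted). Translating back to $P$ and passing to a subsequential limit produces a single \emph{biinfinite} $D$--stable geodesic in $P$, and the contradiction with linear divergence is then the black box \cite[Lemma 3.15]{DMS}.

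The two approaches are morally dual: you want to build, for each $n$, a linear-divergence detour in $g_nP_n$ that witnesses failure of middle recurrence for $\gamma_n$, and then manage all constants uniformly in $n$; the paper instead pushes the stability into a single peripheral $P$, uses a limiting argument to collapse the family $\{p_i\}$ to one biinfinite stable geodesic, and lets \cite{DMS} handle the tension between stability and linear divergence once and for all. What the paper's route buys is precisely the uniformity you flagged as the main obstacle: after the limit, there is only one object and no $n$--dependence to track. What your route would buy, if carried out, is a more self-contained argument that does not rely on the asymptotic-cone machinery implicit in \cite[Lemma 3.15]{DMS}. Note also that your sketch appeals to ``hyperbolicity of the cusped space'' for the fellow-traveling step; in practice you would need the same relatively hyperbolic input the paper uses (Sisto's or Dru\c tu--Sapir's projection lemmas in the Cayley graph of $G$), since the relevant geometry takes place in $G$, not in $\cusp(G,\P)$.
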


We note that the hypothesis that each $P_i$ has linear divergence cannot be removed. For example, if $G$ is additionally assumed to be hyperbolic, then any cyclic subgroup of $G$ is stable, regardless of whether it is contained in some $P_i$.  

The following proposition imposes no conditions on the peripheral subgroups:

\begin{proposition}\label{prop:pb_relhyp}
Suppose that $(G,\P)$ is relatively hyperbolic and that $H$ is a finitely generated subgroup $G$. If $H$ has an orbit map into $\cusp(G,\P)$ which is a quasi-isometric embedding, then $H$ is stable in $G$.
\end{proposition}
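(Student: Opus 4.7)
The plan is to show that the hypotheses of \Cref{th:pull_back} are all satisfied, so that pulling back stability applies directly. The three ingredients required are: (i) a proper action of $G$ on a proper geodesic metric space, (ii) that the orbit map of $H$ into that space is a \emph{stable} embedding (not merely a quasi-isometric one), and (iii) finite generation of $G$ and $H$.

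For (i), I would first recall the relevant properties of $\cusp(G,\P)$. By the Groves--Manning construction, the cusped space is a connected graph (hence a geodesic metric space), and since $S$ is finite and horoballs are attached equivariantly, $\cusp(G,\P)$ is locally finite, hence proper. Moreover, as noted in the paper, the induced action $G \curvearrowright \cusp(G,\P)$ is properly discontinuous. Finite generation of $G$ is part of the standing definition of relative hyperbolicity we are using, and $H$ is assumed finitely generated.

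The heart of the argument is (ii): upgrading the quasi-isometric orbit map $H \to \cusp(G,\P)$ to a stable embedding. Here we use the defining feature of Groves--Manning relative hyperbolicity, namely that $\cusp(G,\P)$ is a $\delta$--hyperbolic geodesic metric space. In a hyperbolic space, the Morse lemma guarantees that for each $\kappa \geq 1$ there is a constant $D = D(\kappa,\delta)$ such that any two $\kappa$--quasigeodesics with common endpoints are $D$--Hausdorff close; in particular, every quasi-isometrically embedded subspace of a hyperbolic space is stable (with stability function depending only on the quasi-isometry constants and $\delta$). Applying this to the image of $H$ under the orbit map shows the orbit map is a stable embedding in the sense of \Cref{def:stab}.

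With all three ingredients in hand, \Cref{th:pull_back} applies directly and yields that $H$ is stable in $G$, completing the proof. I do not anticipate any genuine obstacles here: the argument is essentially a sanity check that $\cusp(G,\P)$, as a hyperbolic proper geodesic space on which $G$ acts properly, fits perfectly into the framework of the pulling-back theorem. The only minor point to confirm is that the orbit map from the Cayley graph of $G$ into $\cusp(G,\P)$ (as opposed to from the vertex set $G$) is coarsely well-defined and Lipschitz, but this is immediate since vertices of $\cusp(G,\P)$ adjacent to distinct elements of $G$ are at distance one apart.
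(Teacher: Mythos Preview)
Your proposal is correct and follows essentially the same approach as the paper: the paper's proof simply notes that $\cusp(G,\P)$ is a locally finite hyperbolic graph on which $G$ acts properly, and invokes \Cref{th:pull_back}. Your version spells out more explicitly the role of hyperbolicity (via the Morse lemma) in upgrading the quasi-isometric orbit map to a stable embedding, which is exactly the implicit step the paper is relying on.
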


\begin{proof}
Since $\cusp(G,\P)$ is a locally finite, hyperbolic graph and the action of $G$ on $\cusp(G,\P)$ is proper, \Cref{th:pull_back} implies that $H$ is stable in $G$, as required.
\end{proof}

Using the $1$--Lipschitz, equivariant map $\cusp(G,\P) \to \cone(G,\P)$, we immediately obtain the implications $[(3) \implies (2) \implies (1)$] in \Cref{th:relhyp_char_stab} as a corollary to \Cref{prop:pb_relhyp}. Note that these implications hold without any conditions on the peripheral subgroups.

We remark that the condition of $H$ quasi-isometrically embedding into $\cone(G, \P)$ is equivalent to ``strong relative quasiconvexity", and hence \Cref{prop:pb_relhyp} gives another proof that such subgroups are hyperbolic; see \cite[Section 9]{Hruska12}.

The remainder of the section will prove the implication $[(1) \implies (3)]$ of \Cref{th:relhyp_char_stab}.

\subsubsection*{Peripheral subgroups and projections}
Fix a relatively hyperbolic group $(G, \PP)$, a generating set $S$ as above, and let $d_G$ denote distance in $G$ with respect to $S$. Let $\overline \P = \{gP : g\in G \text{ and } P_i \in \P\}$ be the set of all left translates of peripheral subgroups.
For each $x \in G$ and $P \in \overline \P$, let $\pi_P(x)$ be all $y \in P$ such that $d_{G}(x,y)\leq d_{G}(x,P)+1$; $\pi_P(x)$ is called the \emph{almost projection} of $x$ onto $P$, following \cite[Definition 4.9]{DS05}. 

For a subset $H$ of $G$ and $P \in \overline \P$, we denote by $\diam_P(H)$ the diameter of the set $\{\pi_P(x) : x \in H\}$. For $x,y \in G$, we also set $d_P(x,y) = \diam_P(\{x,y\})$.
%


%


\begin{lemma}\label{cor:srqc via proj}
Let $(G, \PP)$ be relatively hyperbolic and $H\le G$ be a finitely generated subgroup.  Any orbit map of $H$ into $\cone(G ,\P)$ is a quasi-isometric embedding if and only if $H$ is undistorted in $G$ and there exists $M_0>0$ such that for each $P \in \overline \P$, we have
$$\diam_{P}(H) < M_0.$$
\end{lemma}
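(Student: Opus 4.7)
Plan: The lemma is essentially a reformulation of Hruska's characterization of strongly relatively quasiconvex subgroups \cite{Hruska12}. The main tool is the Drutu--Sapir distance formula for relatively hyperbolic groups \cite{DS05}: for a sufficiently large threshold $L>0$, there exist constants $A, B>0$ such that for all $x, y \in G$,
\[
\frac{1}{A}\left(d_{\cone}(x,y) + \sum_{P \in \overline{\PP}}[d_P(x,y)]_L\right) - B \;\le\; d_G(x,y) \;\le\; A\left(d_{\cone}(x,y) + \sum_{P \in \overline{\PP}}[d_P(x,y)]_L\right) + B,
\]
where $[n]_L = n$ if $n \ge L$, and $0$ otherwise.

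For the backward direction ($\Leftarrow$), assume $H$ is undistorted in $G$ and $\diam_P(H) < M_0$ for every $P \in \overline{\PP}$. For any $h_1, h_2 \in H$, each nonzero summand in $\sum_P [d_P(h_1, h_2)]_L$ is bounded by $M_0$, and the bounded coset penetration property limits the number of nonzero summands to at most a linear function of $d_{\cone}(h_1, h_2)$. Plugging into the distance formula yields $d_G(h_1, h_2) = O(d_{\cone}(h_1, h_2))$ on pairs from $H$. Combined with $d_G \ge d_{\cone}$ and undistortion ($d_H \asymp d_G$), this gives $d_H \asymp d_{\cone}$ on $H$, so the orbit map is a quasi-isometric embedding.

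For the forward direction ($\Rightarrow$), suppose the orbit map $\varphi \colon H \to \cone(G, \PP)$ is a $(Q, Q)$-quasi-isometric embedding. Since $G \to \cone(G, \PP)$ is $1$-Lipschitz, the chain of inequalities $d_{\cone} \le d_G \le d_H \le Q\, d_{\cone} + Q \le Q\, d_G + Q$ on $H$ yields undistortion of $H$ in $G$ immediately. For the uniform bound on projection diameters, I would invoke Hruska's characterization \cite{Hruska12}: the QI embedding of $H$ into $\cone(G, \PP)$ is an equivalent formulation of $H$ being strongly relatively quasiconvex, which in turn entails $\diam_P(H) < M_0$ uniformly across $P \in \overline{\PP}$.

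The main obstacle is the uniform boundedness of projection diameters in the forward direction: the distance formula alone yields only that $\sum_P [d_P(h_1, h_2)]_L$ is controlled linearly by $d_{\cone}(h_1, h_2)$, which does not directly give a uniform bound on $\diam_P(H)$ across all $P$. A direct argument would use $G$-equivariance of the projections $\pi_P$, translating by an $H$-element closest to $P$ so as to reduce to pairs $e, h' \in H$ with $d_G(e, P)$ bounded and $d_P(e, h')$ arbitrarily large; the $\cone$-shortcut through $P$ would then make $d_{\cone}(e, h')$ much smaller than $d_G(e, h')$, contradicting the upper bound $d_G \le Q\, d_{\cone} + Q$ afforded by the QI embedding.
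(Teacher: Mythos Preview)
Your backward direction is essentially the paper's: both invoke the distance formula (the paper cites Sisto, you cite Dru\c{t}u--Sapir). You can simplify by choosing the threshold $L > M_0$, so that every summand $[d_P(h_1,h_2)]_L$ vanishes outright; there is no need to count the nonzero terms.

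For the forward direction, your primary move---quote Hruska's equivalence of strong relative quasiconvexity with quasi-isometric embedding into $\cone(G,\P)$---is legitimate but close to circular, since the lemma is precisely a reformulation of that equivalence. The paper instead argues directly via \cite[Lemma~1.15]{Sisto:relhyp} (equivalently \cite[Lemma~4.15]{DS05}): if $d_{gP}(x,y)$ is large, then any $(\kappa,\lambda)$-quasigeodesic in $G$ from $x$ to $y$ must contain points $x_P, y_P$ within bounded $G$-distance of $\pi_{gP}(x), \pi_{gP}(y)$ respectively. Applying this to an $H$-geodesic yields $x_P, y_P \in H$ with $d_{\cone}(x_P,y_P) \le 2R+2$ (both are near the single coset $gP$) but $d_H(x_P,y_P)$ coarsely at least $d_{gP}(x,y)$, contradicting the quasi-isometric embedding if projections are unbounded.

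Your sketched direct argument has a genuine gap. Translating by ``an $H$-element closest to $P$'' does not produce a pair $e, h'$ with $d_G(e,P)$ bounded: nothing prevents every element of $H$ from being far from $P$ in $G$ while $\diam_P(H)$ is still large. Consequently your $\cone$-shortcut bound reads $d_{\cone}(h_1,h_2) \le d_G(h_1,P) + 1 + d_G(h_2,P)$, which is useless without control on the distances to $P$. The missing idea is exactly the Sisto/Dru\c{t}u--Sapir lemma above: rather than moving the original endpoints close to $P$, one locates intermediate points on the $H$-geodesic that are automatically close to $P$.
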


\begin{proof}
The reverse implication is a consequence of \cite[Theorem 0.1]{Sisto:relhyp}, which establishes a formula for distance in $G$ in terms of distance in $\cone(G, \P)$ and projections to subgroups in $\overline{\P}$.
In our setting, this implies that when $\diam_{P}(H) < M_0$ for each $P \in \overline \P$, distance in $G$ (and hence $H$ by nondistortion) is coarsely equal to the distance between the corresponding orbit points in $\cone(G, \P)$.

For the forward direction, let $x, y \in H$ and let $\gamma$ be any geodesic in $H$ between $x$ and $y$; since $H$ is undistorted in $G$, there exists $\kappa \geq 1, \lambda \geq 0$ such that $\gamma$ is a $(\kappa,\lambda)$-quasigeodesic in $G$.
By \cite[Lemma 1.15]{Sisto:relhyp} (see also \cite[Lemma 4.15]{DS05}), there exist $M, R>0$ depending only on $G$ and $\kappa,\lambda$ so that if $d_{g\cdot P}(x, y) \ge M$ for some $g \in G, P \in \PP$, then there exist $x_P , y_P \in \gamma$ such that $x_P \in \mathcal{N}_{R}(\pi_{g \cdot P}(x))$ and $y_P \in \mathcal{N}_{R}(\pi_{g \cdot P}(y))$.
Hence $d_{H}(x_P,y_P)$ is coarsely bounded below by $d_{g\cdot P}(x, y)$.  However, $d_{\cone(G,\P)}(x_P,y_P) \leq 2R + 2$ by construction of $\cone(G,\P)$.

Thus if there is no bound on the projections of $H$ to the peripheral subgroups and their translates, then we obtain a contradiction of the fact that $H$ quasi-isometrically embeds in  $\cone(G,\P)$.
\end{proof}

\begin{proof}[Proof of \Cref{th:relhyp_char_stab}]

By our observations above, it suffices to prove the implication $[(1) \implies (3)]$. Hence, assume that $H$ is stable in $G$. By \Cref{cor:srqc via proj}, it suffice to show that there is a uniform bound on $\diam_P(H)$ for each $P \in \overline \P$. 

Towards a contradiction, assume that there exists sequences $h_i,h'_i \in H$ and $g_i \in G$ such that 
\[
d_{g_iP}(h_i,h'_i) \ge i
\]
for some peripheral subgroup $P \in \P$.

Let $p_i$ be a geodesic in $g_iP$ (with its induced word metric) joining points of $\pi_{g_iP}(h_i)$ and $\pi_{g_iP}(h'_i)$ at distance $i$. Since $H$ is undistorted in $G$, any $H$--geodesic $[h_i,h'_i]_H$ is a uniform quasigeodesic in $G$. Fix such an $H$--geodesic. Again by \cite[Lemma 1.15]{Sisto:relhyp}, there is a constant $R \ge0$ depending only on $H \le G$ such that there exist $x_i, y_i \in [h_i,h'_i]_H$ such that $d_G(x_i,\pi_{g_iP}(h_i)),d_G(y_i,\pi_{g_iP}(h'_i))<R$.  Since $p_i$, as a path in $G$, is also quasigeodesic with uniform constants, stability of $H \le G$ implies that $p_i$ and the portion of $[h_i,h'_i]_H$ between $x_i$ and $y_i$ uniformly fellow travel. This implies that the paths $p_i$ are also uniformly stable in $G$ and, again using that each $P \in \P$ are undistorted, that the $p_i$ are uniformly stable as geodesics in $g_iP$. Denote their common stability function by $D \colon \R_+ \to \R_+$.

Hence, for each $i \ge0$ we have a geodesic path $p_i$ in $ g_i P$ of length $i$ such that $p_i$ is $D$--stable, for a stability function that depends only on $H \le G$. Then $\gamma_i = g_i^{-1} p_i$ is a geodesic in $P$ of length $i$ which is also $D$--stable, since $g_i \in G$ induces an isometry between $P$ and $g_i P$. After left multiplication by an element of $P$, we may assume that each $\gamma_i$ has $1 \in P$ as an approximate midpoint and, after passing to a subsequence, conclude that there is a biinfinite geodesic $\gamma$ in $P$ such that $\gamma_i$ converges uniformly on compact sets to $\gamma$. We conclude that $\gamma$ is a biinfinite $D$-stable geodesic in the group $P$. By \cite[Lemma 3.15]{DMS}, the existence of a biinfinite stable quasigeodesic contradicts that $P$ has linear divergence and completes the proof.
\end{proof}


\section{Appendix} \label{sec:appendix}
In this section, we prove the technical \Cref{lem:Out_middle}. As before, we expect the reader to be somewhat familiar with the geometry of Outer space. 
We let $N^\leftarrow_k(\gamma)$ denote the \emph{inward} $k$-neighborhood of $\gamma$, that is all points whose distance to $\gamma$ no more than $k$. Recall that if $\gamma \subset \X^{\ge \epsilon}$, then $N^\leftarrow_k(\gamma) \subset \X^{\ge \epsilon'}$ for $\epsilon'$ depending only on $\epsilon$ and $k$.
Finally, we remind the reader that given $\epsilon >0$ there is an $M_\epsilon$ such that if $a,b \in  \X^{\ge \epsilon}$, then $d(a,b) \le M_\epsilon d(b,a)$ \cite{algom2012asymmetry}.

\begin{lemma}\label{lem:Out_middle}
For each $D, C \ge0$ there is an $R\ge0$ such that the following holds:
Let $\gamma \colon [0,T] \to \X^{\ge \epsilon}_n$ be a directed geodesic 
which is $D$--strongly contracting. Suppose that $p$ is a path in $\X^{\ge \epsilon}_n$ with endpoints $a = \gamma(0)$ and $b = \gamma(T)$  such that $\|p\| \le C \cdot d(a,b)$. Then there are $x\in p$ and $t \in \mathbb{R}$ with $T/3 \le t \le 2T/3$ such that $\symd(x,\gamma(t)) \le R$.
\end{lemma}

\begin{proof}
Fix $C,D \ge0$. We note that it suffices to prove that lemma for $T\ge 6D$.

Let $L\ge0$ be a number such that there is a path $p$  with endpoints $a = \gamma(0)$ and $b=\gamma(T)$, which avoids the inward $L$--neighborhood of $ \widebar \gamma = \gamma|_{[T/3, 2T/3]}$ and for which $\|p\| \le C d(a,b)$. We show that $L$ is uniformly bounded. Denote the portions of $\gamma$ which come before and after $\widebar \gamma$, by $\gamma_{\mathbf{l}}$ and $\gamma_{\mathbf{r}}$, respectively.
Note that $T = d(a,b) = |p|$ and that $T \le \|p\|  \le C d(a,b) = CT$

First, set $K = \frac{L}{4C(1+M_\epsilon)}$. Since $L \le \|p\| \le C |p| = CT$, we have that $K \le \frac{T}{4(1+M_\epsilon)}$. Since $p$ avoids the inner $L$--neighborhood of $\widebar \gamma$, it also avoids the $K$--neighborhood of $\widebar \gamma$. 

Let $\mathcal{H}$ be the collection of maximal subarcs of $p$ which lie outside of the open inner $K$--neighborhood of $\gamma$. Order $\mathcal{H}$ using the orientation of $p$. We claim that there is an $h \in \mathcal{H}$ which has one endpoint $K$--close to $\gamma_{\mathbf{l}}$ and one endpoint $K$--close to $\gamma_{\mathbf{r}}$. To see this, first note that any point of $p$ in the closed $K$--neighborhood of $\gamma$, include endpoints of paths in $\mathcal{H}$, has distance at most $K$ from either $\gamma_{\mathbf{l}}$ or $\gamma_{\mathbf{r}}$. This is because the entire path $p$ avoids $\widebar \gamma$. If each path in $\mathcal{H}$ has both its endpoints close to only one of $\gamma_{\mathbf{l}}$ or $\gamma_{\mathbf{r}}$, then there must consecutive subpaths $h_i$ and $h_{i+1}$ in $\mathcal{H}$ such that the terminal endpoint of $h_i$ is close to a different `end' of $\gamma$ than the initial endpoint of  $h_{i+1}$. If $q$ is the closed subpath of $p$ between $h_i$ and $h_{i+1}$, then each point of $q$ lies within distance $K$ from either $\gamma_{\mathbf{l}}$ or $\gamma_{\mathbf{r}}$, and its endpoint $K$--are close to different ends. Hence, by continuity, there is a point $t \in q$ which is $K$--close to points in both $\gamma_{\mathbf{l}}$ and $\gamma_{\mathbf{r}}$. This gives a point in $\gamma_{\mathbf{l}}$ and a point in $\gamma_{\mathbf{r}}$ which have distance at most $K (1+M_\epsilon)$ from one another (in the direction along $\gamma$). However, by construction, such points necessarily have oriented distance at least $T/3$ and so we obtain that $T/3 \le K (1+M_\epsilon) \le T/8$, a contradiction.

We conclude that there is a $h \in \mathcal{H}$ with endpoints $x,y$ such that $d(x,\gamma_{\mathbf{l}}) \le K$ and $d(y, \gamma_{\mathbf{r}})\le K$. Let $x' \in \gamma$ and $y' \in \gamma$ be points realizing these distances. We note that these points are within (symmetric) distance $D$ from the projections to $\gamma$ of $x$ and $y$, respectively.

Break up $h$ into $m$ consecutive subpaths $h_{1},..., h_{m}$ so that  $\|h_{i}\| = K$ for all $i < m$, and $\|h_{m}\| < K$.  Thus $\|h\| \geq (m-1) \cdot K$, so $m \leq (\|h\|/K)+1$.  

The starting point of such a subpath is the center of a ball of outward radius $K$ which misses $\gamma$ by construction. Hence the diameter of the projection of $h_{i}$ to $\gamma$ is no more than $D$. Stringing together the projections of the $h_i$ we have a path from $x'$ to $y'$ of length at most $Dm \le D (\|h\|/K +1)$. As $\widebar \gamma $ lies between $x'$ and $y'$, and $\gamma$ is a directed geodesic, we have

\begin{align*}
1/3 T &\le D (\|h\|/K +1) \le D(\|p\|/K +1) \\
&\le D(C/K \cdot T +1).
\end{align*}

Rearranging gives,
\[
K \le \frac{3DCT}{T-3D}.
\]

Hence, so long as $T\ge 6D$, we have that $K \le 6CD$. Then by definition of $K$, the constant $L$ is bounded by $ 24C^2D(1+M_\epsilon)$. We conclude that there is an $x \in p$ such that $d(x ,\widebar \gamma) \le 24C^D(1+M_\epsilon)$. Since both $x$ and $\gamma$ are in $\X^{\ge \epsilon}$, we conclude that there is $T/3 \le t \le 2T/3$ with $\symd(x,\gamma(t)) \le 24C^2DM_\epsilon(1+M_\epsilon)$.
\end{proof}

\bibliographystyle{alpha}
\bibliography{mrstab.bbl}

\end{document}